\newcounter{dummy} \numberwithin{dummy}{section}
\newtheorem{theorem}[dummy]{Theorem}
\newtheorem{corollary}[dummy]{Corollary}
\newtheorem{lemma}[dummy]{Lemma}
\newtheorem{definition}[dummy]{Definition}
\newtheorem{proposition}[dummy]{Proposition}
\theoremstyle{remark}
\newtheorem{remark}[dummy]{Remark}
\newtheorem{example}[dummy]{Example}
\newcommand{\real}{\mathbb{R}}
\newcommand{\calA}{\mathcal{A}}
\newcommand{\calE}{\mathcal{E}}
\newcommand{\calH}{\mathcal{H}}
\newcommand{\calL}{\mathcal{L}}
\newcommand{\calR}{\mathcal{R}}
\newcommand{\calV}{\mathcal{V}}
\newcommand{\frakg}{\mathfrak{g}}
\newcommand{\frakk}{\mathfrak{k}}
\newcommand{\frakp}{\mathfrak{p}}
\DeclareMathOperator{\spn}{span}
\DeclareMathOperator{\pr}{pr}
\DeclareMathOperator{\inc}{inc}
\DeclareMathOperator{\id}{id}
\DeclareMathOperator{\Ann}{Ann}
\DeclareMathOperator{\vl}{vl}
\DeclareMathOperator{\tr}{tr}
\DeclareMathOperator{\Isom}{Isom}
\newcommand{\tensorg}{\mathbf{g}}
\newcommand{\tensorh}{\mathbf{h}}
\newcommand{\tensorr}{\mathbf{r}}
\newcommand{\metricd}{\mathsf{d}}
\DeclareMathOperator{\SO}{SO}
\DeclareMathOperator{\Ad}{Ad}
\DeclareMathOperator{\so}{\mathfrak{so}}
\DeclareMathOperator{\se}{\mathfrak{se}}
\newcommand{\bnabla}{\blacktriangledown}
\DeclareMathOperator{\proj}{\boldsymbol \chi}
\newcommand{\zero}{\mathbf 0}
\newcommand{\ve}{\varepsilon}
\newcommand{\FSO}{{\rm F^{\rm SO}}}
\newcommand{\liftmap}{S}
\newcommand\newbullet{{\kern.8pt\displaystyle\centerdot\kern.8pt}}
\numberwithin{equation}{section}
\title[Submersions, Hamiltonians and rolling manifolds]{Submersions, Hamiltonian systems and optimal solutions to the rolling manifolds problem}
\author[E. Grong]{Erlend Grong}
\address{University of Luxembourg, FSTC, Mathematics Research Unit, 6, rue Coudenhove-Kalergi, L-1359 Luxembourg-Kirchberg, Luxembourg}
\email{erlend.grong@gmail.com}
\subjclass[2010]{37J60, 70E18, 53A17, 49B10}
\keywords{Submersions, Hamiltonian systems, Optimal control, Sub-Riemannian manifolds, Rolling manifolds}
\begin{document}

\let\thefootnote\relax\footnotetext{This research was supported by the Fonds National de la
  Recherche Luxembourg (AFR 4736116 and OPEN Project GEOMREV).}

\begin{abstract}
Given a submersion $\pi:Q \to M$ with an Ehresmann connection~$\calH$, we describe how to solve Hamiltonian systems on $M$ by lifting our problem to $Q$. Furthermore, we show that all solutions of these lifted Hamiltonian systems can be described using the original Hamiltonian vector field on $M$ along with a generalization of the magnetic force. This generalized force is described using the curvature of $\calH$ along with a new form of parallel transport of covectors vanishing on $\calH$. Using the Pontryagin maximum principle, we apply this theory to optimal control problems $M$ and $Q$ to get results on normal and abnormal extremals. We give a demonstration of our theory by considering the optimal control problem of one Riemannian manifold rolling on another without twisting or slipping along curves of minimal length. \end{abstract}

\maketitle

\section{Introduction}
Finding a path of minimal length for rolling a ball on a table without twisting or slipping is a problem with surprising depth and relations to geometry. This was first addressed with the paper of V.~Jurdjevic \cite{Jur93}, where it was shown that the mimimal paths of the plate-ball problem are given by \emph{elastica}. In this setting, an elastic refers to a curve~$\gamma(t)$ in the euclidean space that minimizes the integral over its geodesic curvature, given initial and final values for both position and velocity. A similar result holds in the setting of Riemannian manifolds with constant sectional curvature \cite{Zim05,JuZi08}. We would like to consider a more general case of two arbitrary Riemannian manifolds rolling on each other without slipping or twisting. These restrictions describe situations with high friction, such as two rubber surfaces rolling against each other~\cite{KoEh07}. Rolling of higher dimensional manifolds was first introduced in~\cite{Nom78}. For higher dimensional applications, we mention \cite{SHL06}, where an interpolation problem of a satellite is solved using a rolling of $\SO(3)$ on three-dimensional euclidean space. Several results concerning controllability of this problem exist, see e.g. \cite{BrHs93,ChKo12,CGK13,ChKo12b, CGK15,Gro12}, however, results of optimality have been limited to the case of constant sectional curvature, even for surfaces.

Rather than considering the rolling manifolds problem exclusively, we first want to develop tools to deal with optimal control problems that we will call \emph{lifted}. Locally, we can give the following description of a lifted optimal control problem. Consider an $n+ \nu$-dimensional manifold $Q$ as the configuration space with local coordinates $q = (x, y) = (x_1, \dots, x_n, y_1,\dots, y_\nu)$ and let $M$ be the image of the map~$(x,y) \mapsto x$. Let $U\subseteq \mathbb{R}^k$ be the space of control parameters and consider the control system
$$\mathsf{b}: Q \times U \to TQ, \qquad (q, u) \mapsto \mathsf{b}(q,u).$$
Relative to a fixed $q_0\in Q$, if $[0,T] \to U$, $t \mapsto u(t)$ is a measurable, bounded curve in $U$, write $q_{u}(t) = (x_u(t), y_u(t))$ for the solution of
$$\dot q_{u} (t) = \mathsf{b}(q_u(t),u(t)),  \qquad q_u(0) = q_0.$$
For a given $q_1$, we want to find a control $\tilde u:[0,T] \to U$ such that $q_{\tilde u} (0) = q_0$, $q_{\tilde u}(T) = q_1$ and such that the functional
$$u \mapsto \int_0^T \mathsf{c}(q_u(t), u(t)) dt, \qquad \mathsf{c} \in C^\infty(Q\times U),$$
is minimized. Roughly speaking, we then say that this optimal control problem is lifted from $M$ if $q_u(t)$ is uniquely determined by $x_u(t)$ and $\mathsf{c}$ is constant in the directions of $y_1, \dots, y_\nu$. For a more precise global statement in terms of submersions, see Section~\ref{sec:LiftedOCP}. Rolling a ball on a table can be seen as such an optimal control problem. Our configuration space is five dimensional, with two coordinates for each surface and one coordinate for their relative configuration. However, by the restrictions of high friction, any rolling motion is uniquely determined by its path along the table. Furthermore, it is the length of the path along the table that we are trying to minimize. Similarly, rolling two manifolds against each other without twisting or slipping can be seen as an optimal control problem lifted from one of the manifolds.

We would like to give a general description of solutions to lifted optimal control problems. We will then apply this description to manifolds rolling without twisting or slipping, but the theory is applicable to any optimal control problem satisfying the above description. Since optimal solutions are related to Hamiltonian systems through the Pontryagin maximum principle, we will start with results on Hamiltonian systems on a submersion $\pi: Q \to M$. We introduce lifted Hamiltonian functions in Section~\ref{sec:SubEhresmann}. Using this concept, we will show in Section~\ref{sec:MainTh} that Hamiltonian system on the base space~$M$ can be solved on the top space~$Q$. This possibility can be a great advantage if computations can be done more easily on the top space rather than the base space. Our result is a generalization for the case of Riemannian submersions, where the geodesics of the base space can be found by computing the geodesics on the top space, see e.g. \cite{Her60b}. 
Furthermore, we will show that projections to the base space of solutions of a lifted Hamiltonian system on $Q$ are given by the original Hamiltonian vector field on $M$ in addition to a generalization of the magnetic force. The result of \cite{Mon84} relating principal bundles over Riemannian manifolds with gauge theory is contained as a special case. The key element of our approach is a new form of parallel transport introduced in Section~\ref{sec:Parallel}. We describe how our result can be applied to optimal control problems in Section~\ref{sec:LiftedOCP}. This is applied to sub-Riemannian manifolds in Section~\ref{sec:sR}, giving several results for normal geodesic and abnormal curves.

Understanding how to solve Hamiltonian systems by going ``upstairs'' or ``downstairs'' on a submersion will both be important for finding the equations for the optimal solutions of the rolling problem in Section~\ref{sec:Rolling}. 
We will give the equations for the optimal solution of the rolling manifold problem with some additional details given for the lower dimensional cases. We will also describe the relation of a general $n$-dimensional manifold rolling on a constant curvature space and $\frakg^*$-gauge theory in Section~\ref{sec:SpaceForm}, where $\frakg$ is $\so(n+1)$, $\so(1,n)$ or $\se(n+1)$. In particular, optimal solutions of two rubber surfaces, with one surface being flat, can be considered a physical interpretation of $\se(3)^*$-gauge theory, which has no Riemannian formulation, see Example~\ref{ex:Gauge}. 

Proofs of many of the results in Section~\ref{sec:liftHamilton} are left to Section~\ref{sec:proofs}.

\section{Lifting Hamiltonian systems} \label{sec:liftHamilton}
\subsection{Notation and conventions} All manifolds are smooth and connected. If $p^{\calE}: \calE \to M$ is a vector bundle over a manifold $M$, we will denote the space of its smooth sections by $\Gamma(\calE)$. For any element $X \in \Gamma(\calE)$, we will use $X|_x$ rather than $X(x)$ for its value at $x \in M$. If $\calE$ has a metric tensor $\tensorg$, we will write $\langle \newbullet, \newbullet \rangle_{\tensorg}$ for the fiberwise inner product and $\|e\|_{\tensorg} = \langle e, e \rangle_{\tensorg}^{1/2}$ for the corresponding norm.

For the natural projection from the cotangent bundle $T^*M$, we will simply write $p^M: T^*M \to M$. 
If $X$ is a vector field on $M$ and $\eta$ is a form, we denote their interior product by $\iota_X \eta$ or $\eta(X, \newbullet).$ If $\lambda \in T^*_xM$ is a covector and $v \in T_xM$ is a vector, both over the same point, we write $\lambda(v)$ as just $\lambda v$ whenever there is no possibility of confusion. We will use the same convention for one-forms and vector fields.

\subsection{Lifted Hamiltonian functions and Ehresmann connections} \label{sec:SubEhresmann}
Let $\tilde H:T^* Q \to \mathbb{R}$ be a Hamiltonian function on an $n+\nu$-dimensional manifold $Q$. Assume that locally we can always find coordinates $(x,y) = (x_1, \dots, x_\nu, y_1, \dots, y_\nu)$ and one-forms $\tau_1, \dots, \tau_\nu$ such that if we give a covector $\tilde \lambda$ in $T^*Q$ coordinates $(x,y,a,b)$ whenever
\begin{equation} \label{coordinates}\tilde \lambda = \sum_{i=1}^n a_i dx_i |_{x,y} + \sum_{\kappa=1}^\nu b_\kappa \tau_\kappa|_{x,y},\end{equation}
then we have $\tilde H(\tilde \lambda) = \tilde H(x,y, a,b) = H(x,a)$ for some function $H$. We want to consider such Hamiltonian functions, which we will call \emph{lifted}. We remark that for the coordinates in \eqref{coordinates} to be well defined, $\tau_1, \dots, \tau_\nu$ must be linearly independent and transverse to the span of $dx_1, \dots, dx_n$. We will show that solutions will depend on both the function $H$ and the geometric data of the subbundle $\calH = \cap_{s=1}^\nu \ker \tau_s$ of~$TQ$.

To make our definition precise, let $\pi : Q \to M$ be a surjective submersion between two connected manifolds $Q$ and $M$, that is, a surjective map such that $\pi_*: TQ \to TM$ is surjective as well. Since every point $x \in M$ is a regular value of~$\pi$, $Q_x := \pi^{-1}(x)$ is always an embedded submanifold of~$M$. The tangent bundles of these submanifolds $Q_x$ together form \emph{the vertical bundle} $\calV := \ker \pi_*$ of $\pi$. We will also call elements of $\calV$ \emph{vertical vectors} and say that a vector field is \emph{vertical} if it only takes values in $\calV$.

If $\pi(q) = x$, then we have natural maps $\pi_{*,q}: T_q Q \to T_xM$ and $\pi_q^* :T_x^*M \to T^*_qQ$. To have choices of inverses for these maps, we add the additional structure of \emph{an Ehresmann connection} on $\pi$, that is, a subbundle $\calH$ of $TQ$ such that $TQ= \calH \oplus \calV$. Relative to this Ehresmann connection, define \emph{the horizontal lift} of~$v\in T_x M$ to~$q \in Q_x$ as the unique element $h_qv \in \calH_q$ satisfying $\pi_{*} h_q v = v.$ This gives us a corresponding map $\pi^2:T^*Q \to T^*M$ such that
$$\pi^2(\tilde \lambda )(v) = \tilde \lambda( h_q v), \qquad \tilde \lambda \in T^*_q Q, v \in T_x M, q \in Q_x.$$
In other words, if $\Ann(\calH)$ is the subbundle of covectors vanishing on $\calH$,
$$\Ann(\calH) = \{ \lambda \in T^*_x M \, : \, \lambda v = 0 \text{ for any } v \in \calH_x, x \in M\},$$
then $\pi^2$ is the unique map such that $\ker \pi^2 = \Ann(\calH)$ and such that $\pi^2$ is an inverse of $\pi_q^*$ on every fiber. We use the notation $\pi^2$, since we have the following commutative diagram
$$\xymatrix{T^*Q \ar[r]^{\pi^2} \ar[d]_{p^Q} & T^*M \ar[d]^{p^M} \\ Q \ar[r]_{\pi} & M}$$
For more details on Ehresmann connections, we refer to \cite[Chapter III.9]{KMS93}.
\begin{definition}
We say that a Hamiltonian function~$\tilde H: T^*Q \to \mathbb{R}$ is lifted from~$M$ if~$\tilde H = H \circ \pi^2$ for some Hamiltonian function $H :T^*M \to \mathbb{R}$, with $\pi^2$ defined relative to some Ehresmann connection $\calH$ on $\pi$.
\end{definition}
In the local coordinates of \eqref{coordinates}, we have $\pi(x,y) = x$,
$$\pi^2 :  \sum_{i=1}^n a_i dx_i |_{x,y} + \sum_{\kappa=1}^\nu b_\kappa \tau_\kappa|_{x,y} \mapsto  \sum_{i=1}^n a_i dx_i |_x,$$
$\calV = \spn \{ \partial_{y_1}, \dots, \partial_{y_\nu}\}$, $\Ann(\calV) = \spn\{ dx_1, \dots, dx_n\}$, $\Ann(\calH) = \spn \{\tau_1, \dots, \tau_\nu\}$ and $\calH = \cap_{\kappa=1}^\nu \ker \tau_\kappa.$ Furthermore,
$$h_{x,y} \partial_{x_j} = \partial_{x_j} |_{x,y} - \pr_{\calV} \partial_{x_j} 
|_{x,y} = \partial_{x_j} |_{x,y} - \sum_{\kappa=1}^\nu c_{\kappa j}(x,y) \partial_{y_\kappa} |_{x,y},$$
with $(c_{\kappa j}) = A B^{-1}$, where $A$ is the matrix $A = (A_{\kappa j} ) = (\tau_\kappa(\partial_{x_j}))$ and $B$ is the invertible matrix $B = (B_{\kappa\mu} ) = (\tau_\kappa (\partial_{y_\mu})).$

\subsection{Parallel transport of vertical vectors} \label{sec:Parallel}
If we have a given Ehresmann connection $\calH$ on a submersion $\pi:Q \to M$, we can also define horizontal lift of vector fields and curves. For a vector field $X$ on $M$, we define a vector field $hX$ on $Q$ by $hX |_q = h_q X|_{\pi(q)}$. As for curves, we say that a curve~$\tilde \gamma(t)$ in~$Q$ is \emph{$\calH$-horizontal} if~$\tilde \gamma(t)$ is absolutely continuous and satisfies $\dot{\tilde \gamma}(t) \in \calH_{\tilde \gamma(t)}$ for almost every $t$. We say that $\tilde \gamma: [0,T] \to Q$ is a horizontal lift of the curve $\gamma:[0,T] \to M$ to $q_0 \in Q_{\gamma(0)}$ if $\tilde \gamma(t)$ is $\calH$-horizontal, $\tilde \gamma(0) = q_0$ and $\pi(\tilde \gamma(t)) = \gamma(t).$ In other words, $\tilde \gamma(t)$ is a horizontal lift of $\gamma(t)$ if it is a solution to the initial value problem
\begin{equation} \label{IVP} \dot{ \tilde \gamma}(t) = h_{\tilde \gamma(t)} \dot \gamma(t), \qquad \tilde \gamma(0) = q_0 \in Q_{\gamma(0)}.\end{equation}
The formulation \eqref{IVP} shows that we have uniqueness of horizontal lifts. However, existence is only assured for sufficiently short time, i.e. for a a sufficiently small value of $\ve >0$ there is a horizontal lift of $\gamma|_{[0,\ve]}$. If horizontal lifts of any absolutely continuous curve in $M$ exist for all time, then $\calH$ is called \emph{complete}.

Horizontal lifts of curves can be considered as a generalization of parallel transport with respect to an affine connection. Indeed, let $\nabla$ be an affine connection defined on the tangent bundle $p^{TM} : TM \to M$. We can then define an Ehresmann connection $\calH^{\nabla}$ on $p^{TM}$ by the property that any smooth curve $X(t)$ in $TM$ is $\calH^{\nabla}$-horizontal if and only if $X(t)$ is a parallel vector field along its projection $\gamma(t) = p^{TM} (X(t))$ in $M$. In this case, horizontal lifts of curves in $M$ to $TM$ are just the parallel transport of vectors with respect to $\nabla$.

However, there exists a second point of view regarding parallel transport of affine connections that involves \emph{vertical lifts}. Let $p^{\calE} : \calE \to M$ be a vector bundle. For any pair of elements $e_1, e_2 \in \calE_x, x \in M$ we define the vertical lift of $e_2$ to $e_1$ by
$$\vl_{e_1} e_2 = \left. \tfrac{d}{dt} (e_1 + te_2) \right|_{t=0} \in T_{e_1} \calE.$$
Similarly, for any element $X \in \Gamma(\calE)$, we define the vertical lift of $X$ as the vector field $\vl X\in \Gamma(T\calE)$ given by
\begin{equation*}  \vl X |_e = \vl_e  X|_{p^{\calE}(e)} \text{ for any } e \in \calE.\end{equation*}
Note that vertical lifts always take values in $\ker p_*^{\calE}.$
With this definition, it can be verified that for any pair of vector fields $X,Y \in \Gamma(TM)$, we have the relation
$$[hX, \vl Y] = \vl \nabla_X Y,.$$
where $hX\in \Gamma(T(TM))$ is the horizontal lift to $TM$ with respect to $\calH^{\nabla}$. We would like to generalize this idea of parallel transport to submersions, even if we have no notion of vertical lifts in this setting.

Recall that if $f:M \to \widehat M$ is a smooth map between two manifolds, then two vector fields $X$ and $\widehat X$ on $M$ and $\widehat M$ respectively are called $f$-related if $f_* X |_x = \widehat X|_{f(x)}$ for any $x \in M$. If $X$ and $Y$ are $f$-related to $\widehat X$ and $\widehat Y$, respectively, then $f_* [X,Y]|_x = [ \widehat X, \widehat Y] |_{f(x)}$. Let $\calH$ be an Ehresmann connection on the submersion $\pi: Q \to M$ with vertical bundle $\calV = \ker \pi_*$. We introduce an operator $\bnabla: \Gamma(TM) \times \Gamma(\calV) \to \Gamma(\calV)$ defined by
\begin{equation} \label{bnabla} \bnabla_X V := [hX, V], \qquad X \in \Gamma(TM), V \in \Gamma(\calV).\end{equation}
This map is indeed well defined, since $hX$ and $V$ are $\pi$-related respectively to $X$ and the zero section of $TM$, so their bracket $[hX, V]$ must be $\pi$-related to the zero section as well. It is only $\real$-linear in the second argument, but $C^\infty(M)$-linear in the first argument, making $\bnabla_X V|_q$  only depend on $X|_{\pi(q)}$. Hence, for any $v \in T_x M$ and $V \in \Gamma(\calV)$, $\bnabla_v V |_q$ is well defined for any $q \in Q_x$. For this reason, we can consider $\bnabla_v V$ as a vector field on~$Q_x$ for any $v \in T_xM$ and $V \in \Gamma(\calV)$. Finally, it follows that if $\gamma(t)$ is a curve in $M$ with a horizontal lift $\tilde \gamma(t)$ in $Q$ and if $V(t)$ is a vertical vector field along $\tilde \gamma(t)$, then $\bnabla_{\dot \gamma} V(t)$ is well defined. We will use this fact to introduce parallel transport of vertical vectors along curves in $M$.

\begin{lemma} \label{lemma:translate}
Let $\gamma: [0,T] \to M$ be an absolutely continuous curve with $\gamma(0) = x_0$. Let $q_0$ be an element in $Q_{x_0}$ and assume that the horizontal lift $\tilde \gamma(t)$ of $\gamma(t)$ with $\tilde \gamma(0) = q_0$ exists. Then, for any $V_0 \in \calV_{q_0}$ there is a unique vertical vector field $V(t)$ along $\tilde \gamma(t)$ which solves the equation
$$\bnabla_{\dot \gamma} V(t) = 0 \quad \text{ and }\quad V(0) = V_0.$$
\end{lemma}

\begin{proof}
Let the respective dimensions of $M$ and $Q$ be $n$ and $n + \nu$. Around the point $q_0$, choose a coordinate system $(x,y) = (x_1, \dots, x_n, y_1, \dots, y_\nu)$ such that the submersion $\pi$ can be written as $\pi(x,y) = x$. Since the image of $\tilde \gamma$ is compact, we may assume that $\tilde \gamma$ is contained in the domain of the local coordinate system by taking a finite subdivision. Write
$$\dot \gamma(t) =  \sum_{i=1}^n \dot x_i(t) \partial_{x_i} \quad \text{and } \quad \bnabla_{\partial_{x_i} } \partial_{y_\kappa} = \sum_{\mu=1}^\nu {\bf \Gamma}_{i\kappa}^\mu \partial_{y_\mu},$$
Then $\dot {\tilde \gamma}(t) = \sum_{i=1}^n \dot x_i(t) \, h\tfrac{\partial}{\partial x_i}$ and $V(t) = \sum_{\kappa=1}^\nu v_\kappa(t) V |_{\tilde \gamma(t)}$ is a solution to
$$\bnabla_{\dot \gamma} V(t) = \sum_{\kappa =1}^\nu \left(\dot v_\kappa(t) + \sum_{i=1}^n \sum_{\mu=1}^\nu x_i(t) v_\mu(t) {\bf \Gamma}^\kappa_{i\mu} \right) \partial_{y_\nu} |_{\tilde \gamma(t)}= 0.$$
These equations, along with the initial conditions, uniquely determine $V$ and since it is given by a linear system, we also have existence.
\end{proof}

From Lemma~\ref{lemma:translate}, given an element $q_0 \in Q_{x_0}$ and a curve $\gamma:[0,T] \to M$ with $\gamma(0) = x_0$, we can define parallel transport of any element $V_0 \in \calV_{q_0}$. However, this parallel transport is only well defined for sufficiently small times such that the horizontal lift of $\gamma$ exists. If $\calH$ is a complete Ehresmann connection, then parallel transport of elements in $\calV$ along curves in $M$ is defined for all time.

Like elements in $\calV$, we can define parallel transport of elements in $\Ann(\calH)$ by $\bnabla_X \beta := \pr_{\calV}^* \calL_{hX} \beta$ for any $X \in \Gamma(TM)$ and $\beta \in \Gamma(\Ann(\calH))$. Here $\calL_{hX}$ denotes the Lie derivative and $\pr_{\calV}$ is the projection to $\calV$ with respect to the decomposition $TQ = \calH \oplus \calV$ with pull-back $\pr_{\calV}^*$, i.e.,  $(\pr_{\calV}^* \lambda)(v) = \lambda (\pr_{\calV} v)$. Note that if $X \in \Gamma(TM)$, $\beta \in \Gamma(\Ann(\calH))$ and $V \in \Gamma(\calV)$, then
$$hX ( \beta V) = (\bnabla_X \beta)V + \beta(\bnabla_X V).$$

\subsection{Curvature and lifted Hamiltonian functions} \label{sec:MainTh} Let $M$ be a manifold with a Hamiltonian function $H \in C^\infty(T^*M)$. Let $\pi: Q \to M$ be a surjective submersion with Ehresmann connection $\calH$, and let $\tilde H = H \circ \pi^2$ be the corresponding lifted Hamiltonian. Associated to every Ehresmann connection $\calH$, we have a vector-valued two-form $\calR \in \Gamma(\bigwedge^2 T^*Q \otimes TQ)$ called \emph{the curvature of} $\calH$. For any pair of vector fields $\tilde X, \tilde Y \in \Gamma(TQ)$, define
$$\calR(\tilde X, \tilde Y) = \pr_{\calV} [\pr_{\calH} \tilde X, \pr_{\calH} \tilde Y],$$
where $\pr_{\calV}$ and $\pr_{\calH}$ are the respective projections to $\calV$ and $\calH$ with respect to the decomposition $TQ = \calH \oplus \calV$. Since this map is $C^\infty(Q)$-linear in each coordinate, $\calR$ is a well defined vector-valued two-form.

We remark that for any pair of vector fields $X, Y \in \Gamma(TM)$, we have $[hX, hY] = h[X,Y] + \calR(hX, hY)$ since $hX$ and $hY$ are $\pi$-related to $X$ and $Y$, respectively. With slight abuse of notation, we will write $\calR(hX, hY)$ simply as $\calR(X,Y)$. Similarly, for any pair of vectors $v,w \in T_xM$, we let $\calR(v,w)$ denote the vector field on $Q_x = \pi^{-1}(x)$ given by $q \mapsto \calR(h_q v, h_q w).$ We also note that by the Jacobi identity,
$$\left( \bnabla_X \bnabla_Y - \bnabla_Y \bnabla_X - \bnabla_{[X,Y]} \right)V = \left[ \calR(X,Y), V \right], \quad X,Y \in \Gamma(TM), V \in \Gamma(\calV).$$

With this formalism in place, we are ready to state our main theorem. For any Hamiltonian function $H$, we let $\vec{H}$ be its Hamiltonian vector field. We will use the term \emph{integral curve of} $H$ to mean an integral curve of its Hamiltonian vector field. Note that our decomposition of the tangent bundle $TQ = \calH \oplus \calV$ gives us a corresponding direct sum representation $T^*Q = \Ann(\calV) \oplus \Ann(\calH)$ of the cotangent bundle.

\begin{theorem} \label{th:main}
Let $H\in C^\infty(T^*M)$ be a Hamiltonian function on $M$ and define $\tilde H = H \circ \pi^2$, where $\pi^2$ is induced by some Ehresmann connection $\calH$ on $\pi:Q \to M$ with curvature $\calR$.
\begin{enumerate}[\rm (a)]
\item A curve in $\lambda(t)$ is an integral curve of $H$ if and only if every sufficiently short segment is the projection of an integral curve $\tilde \lambda(t)$ of $\tilde H$ which is contained in~$\Ann(\calV)$. Actually, it is sufficient to require that $\tilde \lambda(t)$ meets $\Ann(\calV)$ in just one point.
\item Let $\tilde \lambda: [0,T] \to T^*Q$ be an integral curve of $\tilde H$ with
$$\begin{array}{m{3cm}m{3cm}} $\beta_0 := \pr_{\calV}^* \tilde \lambda(0)$ & \xymatrix{\tilde \lambda(t) \ar@{|->}[r]^{\pi^2} \ar@{|->}[d]_{p^Q} & \lambda(t) \ar@{|->}[d]^{p^M} \\
\tilde \gamma(t) \ar@{|->}[r]_{\pi} & \gamma(t) } \end{array}.$$
The curve $\tilde \gamma(t)$ is then an $\calH$-horizontal lift of $\gamma(t)$. The curve $\lambda(t)$ is a solution to the equation
\begin{equation} \label{maineq1} \dot \lambda(t) = \vec{H} |_{\lambda(t)} + \vl_{\lambda(t)} \beta(t) \calR(\dot \gamma(t), \newbullet ) \, ,\end{equation}
where $\beta(t)$ is the curve in $\Ann(\calH)$ defined as the parallel transport of~$\beta_0$ along~$\gamma(t)$, that is
\begin{equation} \label{maineq2} \bnabla_{\dot \gamma} \beta (t) = 0 \qquad \beta(0) = \beta_0.\end{equation}
\end{enumerate}
\end{theorem}

The above results can be interpreted in the following way. First of all, Theorem~\ref{th:main}~(a) tells us that given a Hamiltonian system on $M$, we can solve this Hamiltonian system by only doing our computations on $Q$. Even though $Q$ is a larger space, there might be reasons why computations on $Q$ are simpler, e.g. existence of group actions or a trivial tangent bundle. An example where we have both of these benefits are homogeneous spaces $M = G/K$, where $G$ is a Lie group and $K \subseteq G$ is a closed subgroup. Any choice of Ehresmann connection on $\pi: G \to M$ will allow us to solve Hamiltonian systems on $M$ by doing computations in $G$. 
The result of Theorem~\ref{th:main}~(b) tells us that as long as we understand the behavior of $\bnabla$-parallel transport, we can solve some Hamiltonian systems of $Q$ on $M$ as well. By the proof of Theorem~\ref{th:main}, it follows that $\beta(t) = \pr_{\calV}^* \tilde \lambda(t)$, hence $\tilde \lambda(t)$ is given as $\tilde \lambda(t) = \pi^*_{\tilde \gamma(t)} \lambda(t) + \beta(t).$ We leave the proof of Theorem~\ref{th:main} to Section~\ref{sec:proofmain}.

\subsection{Special cases}
We look at two important special cases; one where the Hamiltonian $H$ comes from a Riemannian metric and one where the submersion $\pi$ is a principal bundle. We end with the case of charged particles in a Riemannian manifold, which is included in both of these special cases.

\subsubsection{Hamiltonian of a Riemannian manifold} \label{sec:Riemann} Let $(M, \tensorg)$ be a Riemannian manifold where $\tensorg$ is the metric tensor. Let $\flat: TM \to T^*M$ be the map $v \mapsto \langle v, \newbullet \rangle_{\tensorg}$ with inverse $\sharp$. Associated to the Riemannian structure, we have a Hamiltonian function $H(\lambda) = \frac{1}{2} \lambda(\sharp \lambda).$ The projections of integral curves of $H$ give us Riemannian geodesics, i.e., solutions to the equation $\nabla_{\dot \gamma} \dot \gamma(t) = 0$ where $\nabla$ is the Levi-Civita connection of $\tensorg$. 

\begin{corollary} \label{cor:Riemann}
Let $\pi: Q \to M$ be a surjective submersion into a Riemannian manifold $(M, \tensorg)$. Let $H$ be the Hamiltonian associated to $\tensorg$ and let $\calH$ be a chosen Ehresmann connection with corresponding map $\pi^2: T^*Q \to T^*M$.

Let $\tilde \lambda:[0,T] \to T^*Q$ be any integral curve of $H \circ \pi^2$ with projection $\tilde \gamma(t)$ in $Q$ and with $\beta_0 = \pr_{\calV}^* \tilde \lambda(0)$. Then $\tilde \gamma(t)$ is an $\calH$-horizontal lift of $\gamma(t)$, the latter being a solution of the equation
$$\nabla_{\dot \gamma} \dot \gamma(t) = \sharp \beta(t) \calR(\dot \gamma(t), \newbullet), \qquad \bnabla_{\dot \gamma} \beta(t) = 0, \qquad \beta(0) = \beta_0.$$
\end{corollary}
\begin{proof}
Pick any local orthonormal basis $X_1, \dots, X_n$ of vector fields relative to~$\tensorg$ and use these vector fields to give the fibers of cotangent bundle $T^*M$ the coordinates $\lambda_i = \lambda X_i$. It is a standard result that the Hamiltonian vector field of $H$ in these coordinates is given by
$$\vec{H} = \sum_{i=1}^n \lambda_i X_i - \sum_{i,j,k=1}^n \lambda_j \lambda_k \Gamma_{jk}^i \partial_{\lambda_i} , \qquad \Gamma^k_{ij} := \left\langle X_k, \nabla_{X_i} X_j \right\rangle_{\tensorg}.$$

Write $\lambda(t) = \pi^2(\tilde \lambda(t)) = \sum_{i=1}^n \lambda_i(t) \flat X_i |_{\gamma(t)}$. Then from Theorem~\ref{th:main}~(b), we obtain
\begin{align*} \dot \lambda_i(t) & = d\lambda_{i}( \dot \lambda(t)) = d\lambda_i \left(\vec{H}|_{\lambda(t)} + \vl\left(\lambda(t), \beta(t) \calR(\dot \gamma(t), \newbullet) \right) \right) \\
 &=  - \sum_{i,j,k=1}^n \lambda_j(t) \lambda_k(t) \Gamma_{jk}^i + \beta(t) \calR(\dot \gamma(t), X_i).
\end{align*}
Hence, since $\dot \gamma(t) = \sum_{i=1}^n \left(\flat X_i(\dot \lambda(t))\right) X_i = \sum_{i=1}^n \lambda_i(t) X_i$, we obtain
\begin{align*} \nabla_{\dot \gamma} \dot \gamma(t) = & \sum_{i=1}^n \dot \lambda(t)  X_i |_{\gamma(t)} + \sum_{i,j,k=1}^n \lambda_i(t) \lambda_j(t) \Gamma^k_{ij} X_k |_{\gamma(t)} \\
= & \sum_{i=1}^n \left( \beta(t) \calR(\dot \gamma(t), X_i) \right) X_i = \sharp \beta(t) \calR(\dot \gamma(t), \newbullet).
\end{align*}
\end{proof}

\subsubsection{Principal connections on principal bundles} \label{sec:Principal} Let us consider the case when $\pi: Q \to M$ is a principal $G$-bundle, with $G$ acting on the right. An Ehresmann connection $\calH$ on $\pi$ is itself called \emph{principal} if $\calH_q \cdot a = \calH_{q \cdot a}$ for every $q \in Q$ and $a \in G$. Principal Ehresmann connections are always complete. Every such Ehresmann connection is uniquely determined by its corresponding \emph{connection} \emph{form}~$\omega$, which is a one-form taking its values in the Lie algebra $\frakg$ of $G$. For any $\frakg$-valued function $f \in C^\infty(Q,\frakg)$, define a vector field $\xi(f)$ on $Q$ by
\begin{equation} \label{sigmaVF} \xi(f)|_q = \left. \frac{d}{dt} q \cdot \exp_G(t f(q)) \right|_{t=0}.\end{equation}
In particular, this gives us a vector field $\xi(A)$ for any constant element $A \in \frakg$.
A~connection form on $\pi$ is then a $\frakg$-valued one-form $\omega$ satisfying
$$\omega(\tilde v \cdot a) = \Ad(a^{-1}) \omega(\tilde v) \quad \text{and} \quad \omega(\xi(A)) = A, \qquad \tilde v \in TQ, a \in G, A \in \frakg.$$
Any principal Ehresmann connection $\calH$ uniquely determines a connection form $\omega$ by $\ker \omega = \calH$ and $\omega(\xi(A)) = A.$
We can also define \emph{the curvature form} $\Omega$ by
$$\Omega(\tilde X, \tilde Y) = d\omega(\tilde X, \tilde Y) + \omega([\tilde X, \tilde Y]) = - \omega(\calR(\tilde X, \tilde Y)).$$

Introduce the vector bundle $\Ad(Q) \to M$ by $\Ad(Q) = (Q \times \frakg)/G$ where the right action of $G$ on $Q \times \frakg$ is given by
$$(q, A) \cdot a = (q \cdot a, \Ad(a^{-1}) A), \qquad q \in Q, A \in \frakg, a \in G.$$
Any section $s$ of $\Ad(Q)$ can equivalently be considered as a function $F^s: Q \to \frakg$ satisfying $F^s(q \cdot a) = \Ad(a^{-1}) F^s(q),$ that is, a $G$\emph{-equivariant function}. This function is defined such that if $s|_{\pi(q)} = (q, A)/G,$ then $F^s(q) = A$.

For any $G$-equivariant function $F^s$ and any vector field $X$ on $M$, the function $dF^s(hX)$ is also $G$-equivariant and so can be considered as a section of~$\Ad(Q)$ as well. We will denote this section by $\nabla_X^\omega s$, giving us an affine connection $\nabla^\omega$ on~$\Ad(Q)$. The property $\Omega(\tilde X \cdot a, \tilde Y \cdot a) = \Ad(a^{-1}) \Omega(\tilde X, \tilde Y)$ means that we can consider $\Omega$ as a two-form on $M$ with values in $\Ad(Q)$. The connection $\nabla^\omega$ induces a connection on the dual $\Ad(Q)^*$ which we denote by the same symbol. We identify $\Ad(Q)^*$ with $\Ad^*(Q)$, where $\Ad^*(Q)$ is defined as $Q \times \frakg^*$ divided out by the action $(q, A^*) \mapsto (q \cdot a, \Ad^*(a^{-1}) A^*)$, $a \in G$, $A^* \in \frakg^*$.

\begin{corollary} \label{cor:Principal}
Let $H$ be a Hamiltonian function on $T^*M$ and let $\tilde H = H \circ \pi^2$ be a Hamiltonian function on $T^*Q$, where $\pi^2: T^* Q \to T^*M$ is the map corresponding to $\calH$. Let $\tilde \lambda:[0,T] \to Q$ be an integral curve of $\tilde H$ and let $\lambda(t) = \pi^2(\tilde \lambda(t))$ be its image in $T^*M$. Assume that $\tilde \lambda(0) \in T_{q_0}Q$ and that $A^* = \tilde \lambda(0) \xi(\newbullet) \in \frakg^*.$

Then $\lambda(t)$ is a solution to
$$\dot \lambda(t) = \vec{H}|_{\lambda(t)} - \vl_{\lambda(t)} c(t) \Omega( \dot \gamma(t), \newbullet),$$
where $c(t)$ is a curve in $\Ad^*(Q)$ over $\gamma$, uniquely determined by
$$\nabla_{\dot \gamma}^\omega c(t) =0, \qquad c(0) = (q_0, A^*)/G.$$
\end{corollary}

\begin{proof}
Notice first that for any $f \in C^\infty(Q, \frakg)$, we have
$$\bnabla_X \xi(f) = [hX, \xi(f)] = \xi(hX f).$$
It follows that $\bnabla_{\dot \gamma} V(t) = 0$ for some curve $\gamma(t)$ in $M$ if and only if $V(t) = \xi(A)|_{\tilde \gamma(t)}$ for any $A \in \frakg$ and some $\calH$-horizontal lift $\tilde \gamma(t)$ of $\gamma(t)$.

On the other hand, if $s \in \Gamma(\Ad(Q))$ with corresponding equivariant function $F^s$, then for any smooth curve $\gamma(t)$, we have $\nabla_{\dot \gamma}^\omega s(t) = 0$ if and only if $\frac{d}{dt} F^s(\tilde \gamma(t)) = 0$ for any horizontal lift $\tilde \gamma(t)$. It follows that if $s(t)$ is any curve in $\Ad(Q)$ over $\gamma(t)$ such that $\nabla_{\dot \gamma}^\omega s(t) = 0$, then $s(t) = (\tilde \gamma(t), A) /G$ for some constant $A \in \Ad(Q)$ and horizontal lift~$\tilde \gamma(t)$.

Putting these two facts together, if $V(t)$ is a vertical vector field along a horizontal lift $\tilde \gamma(t)$ in $Q$ of $\gamma(t)$, then
$$z(\bnabla_{\dot \gamma} V(t)) = \nabla^\omega_{\dot \gamma} z(V(t)),$$
$$\text{where } \qquad z: \calV \to \Ad(Q), \qquad z: \tilde v \in \calV_q \mapsto (q, \omega(\tilde v))/G.$$
The result now follows from Theorem~\ref{th:main}~(b).
\end{proof}

\begin{example} \label{ex:Gauge}
Let $(M, \tensorg)$ be a Riemannian manifold with corresponding Hamiltonian function $H = \frac{1}{2} \lambda(\sharp \lambda)$. Let $\pi: Q \to M$ be a principal $G$-bundle with a principal Ehresmann connection~$\calH$. Let $\omega$ and $\Omega$ be respectively the connection form and curvature form of~$\calH$ with values in $\frakg$. Let $\pi^2: T^*Q \to T^*M$ also correspond to~$\calH$. Let $\tilde \lambda(t)$ be an integral curve of $H \circ \pi^2$ in $T^*Q$ and define $\gamma(t) = \pi(p^Q(\tilde \lambda(t)))$. Combining the results of Corollary~\ref{cor:Riemann} and Corollary~\ref{cor:Principal}, we get that $\gamma(t)$ is a solution to
\begin{equation} \label{Gauge} \nabla_{\dot \gamma} \dot \gamma(t) = - \sharp c(t) \Omega(\dot \gamma(t), \newbullet), \qquad \nabla_{\dot \gamma}^\omega c(t) = 0.\end{equation}
These are the equations of a free particle in $M$ with a ``color charge'' or gauge $c$ in a Yang-Mills field $\Omega$. If $G$ is $U(1)$ or $\real$, then $c$ represents the charge and $\Omega$ represents a magnetic field. If $\frakg$ has a bi-invariant metric $\langle \newbullet, \newbullet \rangle_{\frakg}$, then the curves in \eqref{Gauge} also appear as projections of the geodesics of the Riemannian metric $\tilde \tensorg$ on $Q$ given by $\langle \tilde v, \tilde w \rangle_{\tilde \tensorg} = \langle \pi_* \tilde v, \pi_* \tilde w \rangle_{ \tensorg} + \langle \omega(\tilde v), \omega(\tilde w) \rangle_{\frakg}.$ In physics, the most important cases of gauge theory are $\frakg = \mathfrak{u}(1), \mathfrak{su}(2)$ and $\mathfrak{su}(3)$, which all have bi-invariant metrics. However, this is not the case for $\frakg = \se(n)$ considered in Section~\ref{sec:SpaceForm}. For more details, see \cite{Mon84} and \cite[Chapter 12]{Mon02}.
\end{example}

\subsection{Lifted optimal control problems} \label{sec:LiftedOCP}
There are many different definitions and generalizations of an optimal control problem. We will use the definition found in \cite[Section 2.1]{AgGa97} and \cite{Agr08}. \emph{A smooth control system} consists of a fiber bundle ${\sf a}: \calA \to M$ with fiber $U$ along with a bundle morphism
$$\xymatrix{\calA \ar[rr]^{\sf b} \ar[dr]_{\sf a} & &  TM \ar[ld]^{p^{TM}} \\ & M}.$$
A curve $A(t)$ in $\calA$ is called {\it an admissible control} if its projection $\gamma(t)$ in $M$ is an $L^\infty$-curve satisfying $\dot \gamma(t) = {\sf b} (A(t))$. We want to consider the following optimal control problem: For a smooth function ${\sf c}:\calA \to \real$ and two points $x_0, x_1 \in M$, find the admissible control $A:[0,T] \to \calA$ which satisfies
$${\sf a}(A(0)) = x_0, \qquad {\sf a}(A(T)) = x_1,$$
and minimize the functional
$$A \mapsto \int_0^{T} {\sf c}(A(t)) \, dt.$$
The latter functional is called {\it the cost functional} and ${\sf c}$ is called \emph{the cost function}.
\begin{definition}
We will denote the above optimal control problem as the optimal control problem associated to $({\sf a}, {\sf b}, {\sf c})$.
\end{definition}

A sufficient condition for an admissible control to be a solution to this optimal control problem is given by the Pontryagin maximum principle (PMP). In order to present this result in a simpler way, we will write the formulation assuming that $\calA$ is a trivial fiber bundle $\calA = M \times U$, which can be considered a local version of the general case. See Remark \ref{re:PMPnontriv} for how this statement can be reformulated for the case when $\calA$ cannot be trivialized. For the proof of this theorem, we refer to \cite[Theorem~12.3]{AgSa04}.

\begin{theorem} \label{th:PMP} {\bf PMP for Optimal Control Problem with fixed time~$T$}  \\
Let $\bar{A}(t) = (\gamma(t), \bar{u}(t))$ be a solution to the optimal control problem associated to $({\sf a}, {\sf b}, {\sf c})$, where $\gamma(t)$ is a curve in $M$ and $\bar{u}(t)$ is a curve in $U$, both with domain~$[0,T]$. For each $\nu \in \real, u \in U$, consider a Hamiltonian function
\begin{equation} \label{PMPHamiltonian} H_{\nu,u}(\lambda) =\lambda \, {\sf b}(x,u) + \nu {\sf c}(x,u), \qquad \lambda \in T_x^*M.\end{equation}
Then there exists a curve $[0,T] \to T^*M$, $t \mapsto \lambda(t)$ and a number $\nu \leq 0$ such that
\begin{enumerate}[\rm (i)]
\item $p^{M} (\lambda(t)) = \gamma(t).$
\item $\dot \lambda (t) = \vec{H}_{\nu, \bar{u}(t)} |_{\lambda(t)}$ for almost every $t$,
\item $H_{\nu,\bar{u}(t)}(\lambda(t)) = \max\limits_{u \in U} H_{\nu,u} (\lambda(t))$  for almost every $t$.
\end{enumerate}
Moreover, $\lambda$ never intersects the zero section $\zero$ of $T^*M$ if $\nu = 0$.
\end{theorem}
We will use the name {\it extremals} for solutions of PMP. They are called {\it normal} if $\nu < 0$ (it is sufficient to consider $\nu=-1$) and {\it abnormal} if $\nu=0$. As is seen in the definition of $H_{\nu,u}$, abnormal extremals do not depend on the function ${\sf c}$, only the control system~$({\sf a}, {\sf b})$.

\begin{remark} \label{re:PMPnontriv}
If the fiber bundle cannot be trivialized, the PMP can be reformulated in the following way. For any $\nu \in \real$, define ${\sf H}_\nu: \calA \times_M T^*M \to \real$ by ${\sf H}_\nu(A,\lambda) = \lambda\, {\sf b}(A) + \nu {\sf c}(A)$, which takes the place of the Hamiltonian in~\eqref{PMPHamiltonian}. Requirement~(ii) is then replaced with the identity
$$\sigma(\dot \lambda(t), \pr_2 |_{\bar{A}(t), \lambda(t)} \newbullet) = d{\sf H}_\nu|_{\bar{A}(t), \lambda(t)},$$
where $\pr_2: \calA \times_M T^*M \to T^*M$ is the projection and $\sigma$ is the canonical symplectic form on $T^*M$. In requirement (iii), the maximum needs to hold over all elements in~$\calA_{\gamma(t)}$.
\end{remark}
We will call ${\sf H}_\nu$ {\it the PMP-Hamiltonian} of the optimal control problem associated to~$({\sf a}, {\sf b} , {\sf c})$. Consider any optimal control problem on $M$ associated to some triple~$({\sf a}, {\sf b} , {\sf c})$. Let $\pi:Q \to M$ be a submersion into $M$ with Ehresmann connection $\calH$ and let $\pi^2: T^*Q \to T^*M$ be defined relative to $\calH$. We then have a lifted optimal problem on $Q$ associated to a triple $(\tilde {\sf a}, \tilde {\sf b}, \tilde {\sf c}).$ This triple is defined by the points (a), (b)  and (c) below.
\begin{enumerate}[\rm (a)]
\item Define the fiber bundle $\tilde {\sf a}: \pi^*\calA \to Q$ as the pull-back bundle of ${\sf a}: \calA \to M$. Recall that this fiber bundle is defined as
$$\pi^* \calA = \left\{ (q,A) \in Q \times \calA \, \colon \, \pi(q) = {\sf a}(A) \right\}.$$

\item Define a bundle morphism $\tilde {\sf b}: \pi^*\calA \to TQ$ by
$$\tilde {\sf b}(q,A) = h_q {\sf b}(A) \qquad \text{for any } (q,A) \in \pi^*\calA,$$
with $h_q$ being the horizontal lift with respect to $\calH$.
\item Let $\pr_{\calA}: \pi^* \calA \to \calA$ be the map $(q, A) \mapsto A$. Then $\tilde {\sf c} \in C^\infty(\pi^*\calA)$ is defined by $\tilde {\sf c} = {\sf c} \circ \pr_{\calA}.$
\end{enumerate}
For this system, we have the following result.

\begin{proposition}
For $\nu \leq 0$, let ${\sf H}_\nu: \calA \times_M T^*M \to \real$ be the PMP-Hamiltonian of the optimal control problem associated to $({\sf a}, {\sf b}, {\sf c})$. Then ${\sf H}_\nu \circ (\pr_{\calA} \times_Q \pi^2)$ is the PMP-Hamiltonian of $(\tilde {\sf a}, \tilde {\sf b}, \tilde {\sf c}).$ As a consequence, the following holds.
\begin{enumerate}[\rm (a)]
\item A curve $\lambda(t)$ in $T^*M$ is a normal (resp. abnormal) extremal if and only if $\lambda(t)$, at least for short time, is the projection of a normal (resp. abnormal) extremal in $T^*Q$ contained in $\Ann(\calV)$ (it is sufficient to require this in only one point).
\item If $\tilde A(t)$ is a solution to the optimal control problem associated to $(\tilde {\sf a}, \tilde {\sf b}, \tilde {\sf c})$, then there is a number $\nu \leq 0$ and a curve $\lambda(t)$ in $T^*M$ (in $T^*M \setminus \zero$ if $\nu=0$) such that
\begin{enumerate}[\rm (i)]
\item $p^{M}(\lambda(t)) =  {\sf a}(\pr_{\calA} \tilde A(t)) =: \gamma(t)$ and $\tilde \gamma(t) = \tilde{\sf a}( \tilde A(t))$ is a horizontal lift of~$\gamma(t)$,
\item of $\pr_{\calA} \tilde A(t) = A(t)$, then for almost every $t$,
$$\sigma(\dot \lambda(t), \pr_2 |_{A(t), \lambda(t)} \newbullet) = d{\sf H}_\nu|_{A(t), \lambda(t)} - \beta(t) \calR(\dot \gamma(t), p^{M}_* \pr_2 |_{A(t), \lambda(t)} \newbullet).$$
with $\beta(t)$ being a form along $\tilde \gamma(t)$, vanishing on $\calH$ and satisfying
$$\bnabla_{\dot \gamma} \beta(t) = 0.$$
\item for almost every $t$,
$${\sf H}_\nu(A(t), \lambda(t)) = \max_{k \in \calA_{\gamma(t)}} {\sf H}_\nu(k,\lambda(t)).$$
\end{enumerate}
\end{enumerate}
\end{proposition}
\begin{proof}
We only need to show the result locally, so we may assume that $\calA = M \times U$ and consequently $\pi^* \calA = Q \times U$.
We then verify that for any $\tilde \lambda \in T^*Q$ and $u \in U$, we have
\begin{align*} \tilde H_{\nu, u}(\tilde \lambda) &:= \tilde \lambda \, \tilde {\sf b}(q,u) + \nu \tilde {\sf c}(q,u), \\
& = \tilde \lambda \, h_q {\sf b}(\pi(q),u) + \nu {\sf c}(\pi(q),u) \\
& = \pi^2(\tilde \lambda) \, {\sf b}(\pi(q),u) + \nu {\sf c}(\pi(q),u) = H_{\nu,u}(\pi^{2}(\tilde \lambda)).\end{align*}
The rest follows from Theorem~\ref{th:main} and the fact that $\sigma(\vl \alpha, \newbullet) = - \alpha(p^M_* \newbullet)$ for any $\alpha \in \Gamma(T^*M)$, where $\sigma$ is the canonical symplectic form on $T^*M$.
\end{proof}

\subsection{Sub-Riemannian manifolds and submersions} \label{sec:sR}
Let $M$ be a manifold, let $p^D:D \to M$ be a subbundle of $TM$ and let $\inc:D \to M$ denote the inclusion map. Consider the control system $(\mathsf{a}, \mathsf{b}) = (p^D, \inc)$. Let $\tensorg$ be a metric tensor defined only on $D$, and consider the optimal control problem associated to $(\mathsf{a}, \mathsf{b}, \mathsf{c})$, where
\begin{equation} \label{costSR} \mathsf{c}(v) = \frac{1}{2} \|v\|_{\tensorg}^2 , \qquad v \in D.\end{equation}
Minimal curves can be considered as length-minimizing curves in a sub-Riemannian manifold. A sub-Riemannian manifold is a triple $(M, D, \tensorg)$, where $M$ is a connected manifold, $D$ is a subbundle of $TM$ and $\tensorg$ is a metric tensor on $D$. The distance in this geometry is given by
$$\metricd_{cc}(x_0,x_1) = \inf \left\{ \int_0^1 \|\dot \gamma \|_{\tensorg} \, dt \, \colon \, \gamma \text{ is $D$-horizontal}, \gamma(0) = x_0, \gamma(1) = x_1 \right\}.$$
A sufficient condition for this distance to be finite, i.e. that any pair of points can be connected by a $D$-horizontal curve, is that $D$ is {\it bracket-generating}. This means that vector fields with values in~$D$ along with the iterated brackets span the entire tangent bundle. We will consider this as an optimal control problem.. Since the cost function is as in \eqref{costSR}, we use $L^2$-controls rather than controls in $L^\infty$. For more details on sub-Riemannian manifolds, see \cite{Mon02}.

Normal extremals can be described in the following way. The metric tensor~$\tensorg$ defines a bundle map $\sharp^{\tensorg}:T^*M \to D$ by the identity $\langle \sharp^{\tensorg} \lambda, v\rangle_{\tensorg} = \lambda v \text{ for any } v\in D.$
Normal extremals are then the solution of the Hamiltonian system with Hamiltonian $H_{sR}(\lambda) = \frac{1}{2} \lambda(\sharp^{\tensorg}\lambda)$. Projections  to $M$ of normal extremals are always smooth and are length minimizers locally. We therefore call such curves in $M$ {\it normal geodesics}. 

Abnormal extremals $\lambda(t)$ on a sub-Riemannian manifold also have an alternate description. Let $\sigma$ be the canonical symplectic form on $T^*M$ and let $\lambda(t)$ be a curve in~$T^*M$. Then $\lambda(t)$ is an abnormal extremal if and only if it is an absolutely continuous curve in $\Ann(D)$ with an $L^2$-derivative, never meeting the zero section, such that
$\varsigma(\dot \lambda(t), \newbullet)|_{\Ann(D)}=0$. A curve fulfilling the latter requirement, is often referred to as a {\it characteristic} of $D$. We will use the term {\it abnormal curve} $\gamma(t)$ if the curve is the projection to $M$ of an abnormal extremal $\lambda(t)$. 

Let $\pi:Q \to M$ be a submersion with vertical bundle $\calV$ and a chosen Ehresmann connection $\calH$. We lift the sub-Riemannian structure as with more general optimal control problems. The lifted structure can be described as the sub-Riemannian manifold $(Q, \tilde D, \tilde \tensorg)$, where
$$\tilde D = \left\{ h_q v \, \colon \, v \in D_x, Q \in Q_x, x \in M \right\} = (\pi_*)^{-1}(D) \cap \calH,$$
and $\tilde \tensorg = \pi^* \tensorg |_{\tilde D} .$ We will say that this sub-Riemannian structure on $Q$ is lifted from~$M$. We present the following corollaries of Theorem \ref{th:main}.

\begin{corollary} \label{cor:sRnormal}
A curve $\lambda(t)$ in $T^*M$ is a normal extremal if and only if any sufficently short segment is a projection of a normal extremal in $T^*Q$ contained in $\Ann(\calV)$. Conversely, the image in $T^*M$ of normal extremals in $T^*Q$ satisfy equation \eqref{maineq1} and \eqref{maineq2} with $H = H_{sR}$.
\end{corollary}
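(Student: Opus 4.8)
The plan is to deduce both assertions directly from Theorem~\ref{th:main} by identifying the lifted sub-Riemannian Hamiltonian with the pullback of $H_{sR}$ along $\pi^2$. The lifted structure $(Q, \widetilde D, \widetilde \metricg)$ carries its own Hamiltonian $\widetilde H_{sR}(\widetilde p) = \tfrac{1}{2} \widetilde p(\sharp_{\widetilde \metricg} \widetilde p)$, whose integral curves are, by definition, the normal extremals on $Q$. If I can show that $\widetilde H_{sR} = H_{sR} \circ \pi^2$, then part (a) of Theorem~\ref{th:main} applied to $H = H_{sR}$ delivers the claimed correspondence between normal extremals, and part (b) delivers the ``conversely'' part concerning equation~\eqref{projcurve}.

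The key step is therefore the identity $\widetilde H_{sR} = H_{sR} \circ \pi^2$, which I would establish fiberwise. Fix $q \in Q$ with $m = \pi(q)$ and $\widetilde p \in T^*_q Q$. Since $\widetilde D_q = h_q D_m$ and $\widetilde \metricg = \pi^* \metricg$, the vector $\sharp_{\widetilde \metricg} \widetilde p$ lies in $\widetilde D_q \subseteq \calH_q$, so I may write it as $h_q X$ for a unique $X \in D_m$. Testing against an arbitrary $h_q v$ with $v \in D_m$, and using both $\widetilde \metricg(h_q X, h_q v) = \metricg(X,v)$ and $\widetilde p(h_q v) = \pi^2(\widetilde p)(v)$, gives $\metricg(X,v) = \pi^2(\widetilde p)(v)$ for all $v \in D_m$, i.e. $X = \sharp_{\metricg} \pi^2(\widetilde p)$. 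Hence $\sharp_{\widetilde \metricg} \widetilde p = h_q \sharp_{\metricg} \pi^2(\widetilde p)$, and
\[
\widetilde H_{sR}(\widetilde p) = \tfrac{1}{2} \widetilde p\big(h_q \sharp_{\metricg} \pi^2(\widetilde p)\big) = \tfrac{1}{2} \pi^2(\widetilde p)\big(\sharp_{\metricg}\pi^2(\widetilde p)\big) = H_{sR}(\pi^2(\widetilde p)),
\]
where the middle equality again uses $\widetilde p(h_q v) = \pi^2(\widetilde p)(v)$.

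With this identity in hand the first assertion is immediate: normal extremals of $M$ are exactly the integral curves of $H_{sR}$, normal extremals of $Q$ are exactly the integral curves of $\widetilde H_{sR} = H_{sR}\circ \pi^2$, and Theorem~\ref{th:main}(a) characterizes the former as projections of the latter contained in $\Ann(\calV)$. The converse statement is Theorem~\ref{th:main}(b) specialized to $H = H_{sR}$, which yields precisely equation~\eqref{projcurve} for the projection of any integral curve of $\widetilde H_{sR}$. The only subtlety is the local-versus-global phrasing: Theorem~\ref{th:main}(a) guarantees the lift only along sufficiently short segments, since the horizontal lift of the base curve need not extend for all time unless $\calH$ is complete, so I would either invoke completeness or read the equivalence segment-wise, exactly as in the theorem. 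I do not expect a genuine obstacle here; the whole content of the corollary is the fiberwise computation of $\sharp_{\widetilde \metricg}$ above.
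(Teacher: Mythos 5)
Your proof is correct and takes essentially the same route as the paper: the corollary is obtained there by specializing the Proposition on lifted PMP-Hamiltonians (whose proof rests on the same identity $\widetilde p(h_q v) = \pi^2(\widetilde p)(v)$ that you use to get $\sharp_{\widetilde \metricg}\, \widetilde p = h_q \sharp_{\metricg} \pi^2(\widetilde p)$, hence $\widetilde H_{sR} = H_{sR} \circ \pi^2$) and then applying Theorem~\ref{th:main}(a) and (b), exactly as you do. Your explicit fiberwise computation and the segment-wise reading of the lifting statement simply supply the details the paper leaves implicit.
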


In the special case when $D = TM$, making the base space a Riemannian manifold, projections of normal geodesics are given by Corollary~\ref{cor:Riemann}. The top space is then a sub-Riemannian manifold $(Q,\calH, \pi^*\tensorg |_{\calH})$. Also, $\calH$-horizontal lifts of Riemannian geodesics are normal geodesics. The latter fact was first observed in \cite[Theorem 6.2, Corollary 6.5]{KhLe09}.

\begin{proposition} \label{prop:sRabnormal}
A curve $\gamma(t)$ in $M$ is abnormal if and only if any horizontal lift of any sufficiently short segment of the curve is abnormal.

Conversely, a curve $\tilde \lambda(t)$ in $\Ann(\tilde D)$ with an $L^2$-derivative is an abnormal extremal if and only if
$$\xymatrix{ \tilde \lambda(t) \ar@{|->}[d]_{\pr_{\calV}^*} \ar@{|->}[rr]^{\pi^2} \ar@{|->}[rd]^{p^Q} & &  \lambda(t)  \ar@{|->}[d]^{p^M} \\ \beta(t)  & \tilde \gamma(t) \ar@{|->}[r]_{\pi} & \gamma(t) } $$
satisfy
\begin{enumerate}[\rm (i)]
\item $\tilde \gamma(t)$ is $\tilde D$-horizontal (and hence $\gamma(t)$ is $D$-horizontal),
\item $\sigma(\dot \lambda(t), \newbullet) |_{\Ann(D)}  = -\beta(t) \calR(\dot \gamma(t), p^M_* \newbullet),$
\item $\bnabla_{\dot \gamma} \beta(t) = 0,$
\end{enumerate}
and $\lambda(t)$ and $\beta(t)$ do not vanish simultaneously.
\end{proposition}

The proof of this theorem uses elements of the proof of Theorem \ref{th:main}, and is therefore left to Section \ref{app:abnormal}. If we consider the special case when $D = TM$, $\Ann(D)$ contains just the zero-section and the result is written as follows.
\begin{corollary}
Let $\pi:Q \to M$ be a submersion and let $\calH$ be an Ehresmann connection on $\pi$. Consider the sub-Riemannian manifold $(Q, \calH, \tilde \tensorg)$ for some metric tensor $\tilde \tensorg$ on $\calH$. Then a $\calH$-horizontal curve $\tilde \gamma(t)$ in $Q$ is an abnormal curve if and only if there is a non-zero curve $\beta(t) \in \Ann(\calH)_{\tilde \gamma(t)}$ such that 
$$\bnabla_{\dot \gamma} \beta(t) = 0, \qquad \beta(t)\calR(\dot \gamma(t), \newbullet) =0, \qquad \text{ where } \pi(\tilde \gamma(t)) = \gamma(t).$$
\end{corollary}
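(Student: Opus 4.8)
The plan is to read off this corollary from Theorem~\ref{th:sRabnormal} by specializing to $D = TM$. Since abnormal curves of a sub-Riemannian structure depend only on the underlying distribution and not on the metric tensor, I may replace $\widetilde\metricg$ by the lift $\pi^*\metricg$ of any Riemannian metric $\metricg$ on $M$ without altering the set of abnormal curves. With $D = TM$ the lifted distribution is $\widetilde D = (\pi_*)^{-1}(TM) \cap \calH = \calH$, so $(Q, \widetilde D, \widetilde\metricg)$ is exactly the structure $(Q, \calH, \widetilde\metricg)$ of the corollary, and Theorem~\ref{th:sRabnormal} applies directly.

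The main simplification comes from the identity $\Ann(D) = \Ann(TM) = \zero^{T^*M}$. First I would observe that the associated curve $\lambda(t) = \pi^2(\widetilde\lambda(t))$ is forced onto the zero section: an abnormal extremal $\widetilde\lambda$ lies in $\Ann(\widetilde D) = \Ann(\calH) = \ker \pi^2$, so $\lambda \equiv \zero^{T^*M}$ along $\gamma$. Moreover, since $\widetilde\lambda \in \Ann(\calH)$, the projection $\beta(t) = \pr_{\calV}^* \widetilde\lambda(t)$ equals $\widetilde\lambda(t)$ itself; thus $\beta$ is nothing but $\widetilde\lambda$ regarded as a vertical covector along $\widetilde\gamma$.

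Next I would go through the three conditions of Theorem~\ref{th:sRabnormal}. The condition that $\widetilde\gamma$ be $\widetilde D$-horizontal is just $\calH$-horizontality, which holds automatically for horizontal curves of $(Q,\calH,\widetilde\metricg)$, while $\bnabla_{\dot\gamma}\beta = 0$ is carried over unchanged. The decisive step is the third condition $\varsigma(\dot\lambda, \bullet)|_{\Ann(D)} = -\beta(t)\,\calR(\dot\gamma, \Pi^{T^*M}_*\bullet)$. Here I would use that the zero section is a Lagrangian submanifold of $(T^*M, \varsigma)$: the tautological one-form vanishes on it, hence so does $\varsigma$ on pairs of vectors tangent to it. Since $\lambda$ runs inside $\zero^{T^*M}$, the velocity $\dot\lambda$ and every test vector tangent to $\Ann(D) = \zero^{T^*M}$ are tangent to this Lagrangian, so the left-hand side vanishes identically. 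Because $\Pi^{T^*M}_*$ maps the tangent spaces of the zero section onto all of $T_\gamma M$, what remains is exactly $\beta(\calR(\dot\gamma, \bullet)) = 0$.

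Finally, the clause that $\lambda$ and $\beta$ do not vanish simultaneously degenerates, since $\lambda \equiv \zero^{T^*M}$; it therefore forces $\beta$ to be nowhere zero, which is precisely the requirement that $\beta$ be a non-zero curve in the statement. Running these observations through both directions of the equivalence in Theorem~\ref{th:sRabnormal}, namely that $\widetilde\gamma$ is abnormal exactly when it is the projection of an abnormal extremal $\widetilde\lambda = \beta$ satisfying the reduced conditions, yields the corollary. The only point that needs genuine care, rather than bookkeeping, is the Lagrangian vanishing of the symplectic term; the rest is a direct transcription.
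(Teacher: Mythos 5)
Your proposal is correct and matches the paper's route exactly: the paper derives this corollary by specializing Theorem~\ref{th:sRabnormal} to $D = TM$, noting that $\Ann(TM)$ is just the zero section and declaring the rest obvious. Your write-up simply makes explicit the details the paper suppresses --- that $\lambda = \pi^2(\widetilde\lambda)$ is forced onto the (Lagrangian) zero section so the symplectic term vanishes, that the non-vanishing clause falls entirely on $\beta = \widetilde\lambda \in \Ann(\calH)$, and that metric-independence of abnormal curves lets an arbitrary $\widetilde{\metricg}$ on $\calH$ be treated as a lifted metric so the theorem applies --- all of which is sound.
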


\section{Optimal control of rolling manifolds} \label{sec:Rolling}
\subsection{Manifolds rolling without twisting or slipping} \label{sec:Rollwithout}
Throughout this section, let $(M, \tensorg)$ and $(\widehat M, \tensorg)$ be two oriented Riemannian manifolds of the same dimension $n \geq 2.$ We want to consider the kinematic system of $M$ rolling on $\widehat M$. Let us first define our configuration space. For two oriented inner product spaces $V$ and $\widehat V$, let $\SO(V, \widehat V)$ be the space of all linear, orientation preserving isomorphisms from $V$ to $\widehat V$ with the obvious induced manifold structure, making it diffeomorphic to $\SO(n)$. Define a fiber bundle $Q$ over $M \times \widehat M$ as
$$Q = \left\{ q \in \SO(T_xM, T_{\widehat x} \widehat M) \, \colon \, x \in M, \widehat x \in \widehat M \right\}.$$
An element $q : T_x M \to T_{\widehat x} \widehat M$ in $Q$ represents a configuration where $M$ lies tangent to $\widehat M$ at the points $x$ and $\widehat x$. Two vectors, one in each tangent space, lie adjacent if one is mapped to the other by $q$.

A rolling is a curve $q(t)$ in this configuration space. We will assume that we have high friction, giving us the constraints that we cannot slip or twist. Let $\pi:Q \to M$ and $\widehat \pi: Q \to \widehat M$ be the respective natural projections to $M$ and~$\widehat M$.

\begin{definition}
An absolutely continuous curve $q(t)$ in $Q$ with $\gamma(t) = \pi(q(t))$ and $\widehat \gamma(t) = \widehat \pi(q(t))$ is a rolling without twisting or slipping if, for almost every $t$,
\begin{enumerate}[$\bullet$]
\item \emph{(No slipping condition)} $q(t) \dot \gamma(t) = \dot {\widehat \gamma}(t),$
\item \emph{(No twisting condition)} Any vector field $X(t)$ along $\gamma(t)$ is parallel if and only if $q(t) X(t)$ is parallel along $\widehat \gamma(t)$.
\end{enumerate}
\end{definition}
Intuitively, the no slipping condition means that if $\gamma(t) = \pi(q(t))$ is a constant curve, then so is $\widehat \gamma(t) = \widehat \pi(q(t))$, while the no twisting condition means that if both $\gamma(t)$ and $\widehat \gamma(t)$ are constant, then so is $q(t)$. These constraints can be described by a subbundle $D$ of $TQ$ of rank $n$. We will describe this subbundle locally and refer to \cite{GGML12} for details.

For any sufficiently small neighborhood $U$ on $M$, choose an orthonormal basis $e_1,\dots, e_n$ on $U$. Let $\widehat U$ be a similar neighborhood in $\widehat M$ with orthonormal basis $\widehat e_1, \dots, \widehat e_n$. We can then trivialize $Q$ over $U \times \widehat U$ by
$$\begin{array}{ccc}
Q|_{U \times \widehat U} & \to & U \times \widehat U \times \SO(n) \\
q \in \SO(T_x M, T_{\widehat x} \widehat M) & \mapsto & \left(x , \widehat x, (q_{rs}) \right)
\end{array} ,$$
where $q_{rs} = \left\langle \widehat e_r|_{\widehat x}, q e_s|_x \right\rangle_{\widehat{\tensorg}}.$ Relative to this trivialization, define vector fields
$$W_{rs} = \sum_{k=1}^n \left(q_{kr} \partial_{q_{ks}} - q_{ks} \partial_{q_{kr}} \right).$$
Then $D$ restricted to $Q|_{U \times \widehat U}$ is spanned by
\begin{equation} \label{BasisQ} e_j + q e_j + \sum_{1 \leq r < s \leq n} \left( \left\langle e_r, \nabla_{e_j} e_s \right\rangle_{\tensorg} - \left\langle qe_r, \nabla_{q e_j} qe_s \right\rangle_{\widehat \tensorg} \right) W_{rs},\end{equation}
for $j = 1, \dots, n$. Here, the $qe_j$ denote the vector fields $q \mapsto qe_j|_{\pi(q)}.$

From now on, we will often omit the phrase ``without twisting or slipping'', only stating that $M$ roll on $\widehat M$, with the constraints being implicit.

\subsection{Rolling along minimal curves as an optimal control problem} \label{sec:SolRolling}
We want to solve the following problem.
\begin{enumerate}[\sf (Opt)] 
\item Given two configurations $q_0$ and $q_1$, find a rolling $q(t)$ from $q_0$ to $q_1$ without twisting or slipping such that $\gamma(t) = \pi(q(t))$ has minimal length.
\end{enumerate}
The no-slip condition ensures that looking for rolling motions which minimizes the length of $\widehat \gamma(t) = \widehat \pi(q(t))$ is an equivalent problem, as the lengths of $\gamma(t)$ and $\widehat \gamma(t)$ must always coincide.

We want to show that the problem {\sf (Opt)} is an optimal control problem lifted from~$M$. Indeed, from the description of $D$ in \eqref{BasisQ}, it follows that $D$ is an Ehresmann connection on $\pi:Q \to M$. Hence, any curve tangent to $D$ is uniquely determined by its starting point and its image in $M$. Furthermore, we are minimizing a cost which only depends on the projection of a rolling $q(t)$ to $M$. In summary, our problem {\sf (Opt)} is a lifting of the problem of finding curves of minimal length in $M$ using the Ehresmann connection $D$. Alternatively, {\sf (Opt)} can be seen as finding minimal curves in the sub-Riemannian manifold $(Q, D, \tensorh)$ where the metric $\tensorh$ on $D$ is defined as
$$\langle v_1, v_2 \rangle_{\tensorh} = \langle \pi_* v_1, \pi_* v_2\rangle_{\tensorg} , \qquad v_1, v_2 \in D.$$
Relative to this metric, the vector fields in \eqref{BasisQ} form a local orthonormal basis. Moreover, $\langle v_1, v_2\rangle_{\tensorh} = \langle \widehat \pi_* v_1, \widehat \pi_* v_2 \rangle_{\widehat \tensorg}$, so the sub-Riemannian structure $(D, \tensorh)$ can be considered as lifted from $\widehat M$ as well.

In general, this problem may not have a solution, since there may be no way to roll from $q_0$ to $q_1$ without twisting or slipping. We therefore assume the following condition.
\begin{enumerate}[\sf (A)]
\item Define $Q_x = \pi^{-1}(x)$. Let $R$ be the curvature tensor of the Levi-Civita connection $\nabla$ on $M$,
$$R(X,Y) = \nabla_X \nabla_Y - \nabla_Y \nabla_X - \nabla_{[X,Y]}, \qquad   X, Y \in \Gamma(TM).$$
Let $\widehat R$ the curvature on $\widehat M$ and define
\begin{equation} \label{qR} \widehat{ R}^q(v_1,v_2) v_3 := q^{-1} \widehat R(q v_1, q v_2) q v_3, \qquad q \in Q_x, v_j \in T_xM. \end{equation}
For any $q \in Q_x$, $x \in M$, we assume that the quadratic form ${\bf s}^q: \bigwedge^2 T_xM \to \real$, given by
$${\bf s}^q(v \wedge w) = \langle R ( v,  w)  v,  w \rangle_{\tensorg}  - \langle \widehat{R}^q( v,  w)  v,  w \rangle_{ \tensorg}$$
is non-degenerate.
\end{enumerate}
From \cite[Theorem 3b, Theorem 5]{Gro12}, we know that if {\sf (A)} holds, then $D$ is bracket-generating and, furthermore, there are no abnormal length minimizers that are not also normal. Hence, we know that a sufficient condition for a rolling $q(t)$ to satisfy {\sf (Opt)} is that it is a projection of an integral curve of the sub-Riemannian Hamiltonian $H_{sR}$ of $(Q, D ,\tensorh).$
We introduce the notation that if $\Lambda \in \bigwedge^2 T_xM$ is a two-vector with decomposition $\Lambda = \sum_{r,s=1}^n \Lambda_{rs} v_r \wedge v_s$ with respect to some basis of $T_xM$, then $R(\Lambda) := \sum_{r,s=1}^n \Lambda_{rs} R(v_r, v_s)$. This definition clearly does not depend on the choice of basis. We use this notation to state our equations for the optimal solutions of the rolling problem.

\begin{theorem} \label{th:optimal}
Let $q(t)$ be a rolling without twisting or slipping with $\gamma(t) = \pi(q(t))$ that solves the problem {\sf (Opt)}. Assume that {\sf (A)} holds. Then there exist a vector field $V(t)$ and a two-vector field $\Lambda(t)$, both along $\gamma(t)$, such that $\gamma(t), V(t)$ and $\Lambda(t)$ are solutions to
$$\nabla_{\dot \gamma} \dot \gamma(t) = R(\Lambda(t)) \dot \gamma(t) - \widehat{R}^{q(t)}( \Lambda(t)) \dot \gamma(t) ,$$
\begin{equation} \label{asymmetry}\nabla_{\dot \gamma} \Lambda(t) = \dot \gamma(t) \wedge V(t), \qquad \nabla_{\dot \gamma} V(t) = \widehat R^{q(t)}( \Lambda(t))  \dot \gamma(t),\end{equation}
where $\widehat R^{q}$ is as in \eqref{qR}.
\end{theorem}
We can write the equations of Theorem~\ref{th:optimal} in $\so(n+1)$. Choose any frame of parallel vector fields $f_1(t), \dots, f_n(t)$ along $\gamma(t)$ and write $\dot \gamma(t) = \sum_{i=1} u_i(t) f_i(t)$, $\Lambda(t) = \sum_{r,s=1}^n \Lambda_{rs}(t) f_r(t) \wedge f_s(t)$ and $V(t) = \sum_{i=1}^n v_i(t) f_i(t).$ Furthermore, write
$$R^{ij}_{rs}(t) = \left\langle R\big(f_i(t),f_j(t)\big) f_r(t), f_s(t) \right\rangle_{\tensorg}, \qquad i,j,r,s =1 ,2, \dots, n.$$
The fact that $R^{ij}_{rs} = R^{rs}_{ij}$ and $R^{ij}_{rs} = - R^{ij}_{sr}$ gives us a linear map $T^R_t: \so(n) \to \so(n)$ by
$$T^R_t:(A_{rs}) \mapsto \left( \sum_{i,j=1}^n R^{ij}_{rs}(t) A_{ij} \right),$$
satisfying $\langle T^R_tA, B \rangle_{\so(n)} = \langle A, T^R_tB \rangle_{\so(n)}$ with respect to $\langle A, B \rangle_{\so(n)} := - \tr AB.$ Define $T^{\widehat R}_t$ analogously with respect to the basis $q(t) f_1(t), \dots, q(t) f_n(t)$. We can consider these maps as endomorphisms on $\so(n+1)$ by
$$T^R_t \left( \begin{array}{cc} A & w \\ -w^\dagger & 0 \end{array} \right) := \left( \begin{array}{cc} T^R_t A & w \\ -w^\dagger & 0 \end{array} \right).$$
Define matrices in $\so(n+1)$ by
$$L(t) = \left( \begin{array}{cc} (\Lambda_{rs}(t) ) & 0 \\ 0 & 0 \end{array} \right), \, \, U(t) = \left( \begin{array}{cc} 0 & u(t) \\ -u(t)^\dagger & 0 \end{array} \right), \, \, Z(t) = \left( \begin{array}{cc} 0 & v(t) \\ -v(t)^\dagger & 0 \end{array} \right),$$
where $u(t) = (u_1(t), \dots, u_n(t))^\dagger$ and $v(t) = (v_1(t) , \dots, v_n(t))^\dagger.$ Then the equations of Theorem~\ref{th:optimal} are given by
\begin{equation} \label{MatrixEquation} \left\{ \begin{array}{c} \dot U(t) = \left[U(t), (T^R_t- T^{\widehat R}_t)L(t)  \right], \\
\dot L(t) = [Z(t), U(t)],  \qquad \dot Z(t) = \left[U(t), T^{\widehat R}_t L(t) \right]. \end{array} \right. \end{equation}
The condition ${\sf (A)}$ is equivalent to $T^{R}_t - T^{\widehat R}_t$ always being an invertible map.
When $M$ and $\widehat M$ are locally symmetric, i.e., when their curvature tensors are parallel, then the maps $T^{R}_t$ and $T^{\widehat R}_t$ are independent of $t$, and \eqref{MatrixEquation} can be solved as matrix equations in $\so(n+1)$.

\begin{remark}
\begin{enumerate}[\rm (a)]
\item
Even though there is an apparent asymmetry between the two manifolds, with the curvature of $\widehat M$, but not of $M$, being present in the equation \eqref{asymmetry}, we could have defined $W(t) = V(t) + \dot \gamma(t)$ and replaced \eqref{asymmetry} with
$$\nabla_{\dot \gamma} \Lambda(t) = \dot \gamma(t) \wedge  W(t), \qquad \nabla_{\dot \gamma} W(t) =  R( \Lambda(t))  \dot \gamma(t).$$
\item
If the condition ${\sf (A)}$ does not hold, it is still true that normal extremals are described in Theorem~\ref{th:optimal}. The only change is that there may be abnormal minimizers and that the problem {\sf (Opt)} may have no solution.  {\sf (A)} is a necessary condition for $D$ to be bracket-generating in the case $n=2$, see \cite{AgSa04,BrHs93}, but not in higher dimensions. For more on controllability of the rolling problem, see~\cite{ChKo12,CGK13,ChKo12b,CGK15,Gro12}. 
\end{enumerate}
\end{remark}


\subsection{Proof of Theorem~\ref{th:optimal}}
By the discussions in Section~\ref{sec:SolRolling} and Corollary~\ref{cor:Riemann}, we know that if $q(t)$ is a solution of {\sf (Opt)} with $\pi(q(t)) = \gamma(t)$, then we have
$$\nabla_{\dot \gamma} \dot \gamma(t) = \sharp \beta (t) \calR(\dot \gamma(t), \newbullet), \qquad \bnabla_{\dot \gamma} \beta(t) = 0.$$
Hence, to finalize the proof, we need to compute the curvature $\calR$ and $\bnabla$-parallel transport. Unfortunately, using local coordinates on $Q$ and the basis \eqref{BasisQ} often lead to complicated calculations. We therefore lift our problem to a larger space $\tilde Q$ using Theorem~\ref{th:main}~(a). This will help us to find the solution.

\subsubsection{Step 1: Lifting of the rolling problem} \label{sec:LiftingProblem}
Let $\nabla$ be the Levi-Civita connection on $(M,\tensorg)$. We will define an oriented orthonormal frame bundle
$$\SO(n) \to \FSO(M) \stackrel{\chi}{\to} M$$
of $M$ as follows. Let $\real^n$ denote the $n$-dimensional euclidean space with its standard basis and orientation. For every $x \in M$, let $\FSO(M)_x$ be the collection of all linear orientation-preserving isometries $f: \real^n \to T_xM$. Each element can equivalently be considered as a choice of a positively oriented orthonormal basis $f_1, f_2, \dots, f_n$ of $T_xM$ by the relation
\begin{equation} \label{MapToBasis} f(r_1,\dots, r_n) = \sum_{i=1}^n r_i f_i, \qquad (r_1, \dots, r_n) \in \real^n.\end{equation}
It is clear that $\FSO(M)_x$ is diffeomorphic to $\SO(n)$ as a manifold, but there is no canonical choice of diffeomorphism. Define a transitive and free right group action of $\SO(n)$ on $\FSO(M)_x$ by $f \cdot a := f \circ a$, where $a \in \SO(n)$ is considered as an endomorphism of $\real^n$. This action allows us to define $\FSO(M) = \coprod_{x \in M} \FSO(M)_x$ as a principal $\SO(n)$-bundle over $M$.

An advantage of the frame bundle is that it has a global basis of vector fields whose brackets are uniquely determined by the curvature $R$. Let $\calH^\nabla$ be the principal connection of $\chi:\FSO(M) \to M$ corresponding to the Levi-Civita connection, that is, $\calH^\nabla$ is the subbundle of $TQ$ defined such that $f(t)$ is tangent to $\calH^{\nabla}$ if and only if $f_1(t), \dots, f_n(t)$ is a parallel frame along $\gamma(t) = \chi(f(t))$. This is well defined since parallel transport with respect to the Levi-Civita connection preserves orientation and orthonormality. The principal connection $\calH^\nabla$ has a canonical choice of basis of vector fields. For any $j =1,2, \dots, n$, define a vector field $X_j$ by
\begin{equation} \label{basisFSO} X_j|_f = h_f f_j, \qquad f \in \FSO(M),\end{equation}
where $h_f$ denotes the horizontal lift to $f$ with respect to $\calH^\nabla$ and the vectors $f_j$ are related to $f$ by~\eqref{MapToBasis}. This gives $T\FSO(M)$ a basis spanned by $X_1, \dots, X_n$ and the vector fields $\xi(A)$, $A \in \so(n)$, where $\xi(A)$ is defined in \eqref{sigmaVF}. 

Let $\omega$ and $\Omega$ be, respectively, the connection form and the curvature form of $\calH^{\nabla}$. The brackets of the vector fields $X_1, \dots, X_n$ and $\xi(A)$, $A \in \so(n)$, are given by
\begin{eqnarray} \label{brackets1} [X_i, X_j] = - \xi(\Omega(X_i, X_j)), & \qquad & [\xi(A), \xi(B)] = \xi([A,B]), \\
 \label{brackets2} [\xi(A), X_j] = \sum_{r=1}^n A_{r j} X_r, & \qquad & A,B \in \so(n), A = (A_{rs}).\end{eqnarray}
The values of $\Omega = (\Omega_{rs})$ in $\so(n)$ are described by
\begin{eqnarray} \nonumber \Omega|_f(hX, hY) =  \Big(- \left\langle R(X,Y) f_r, f_s \right\rangle_{\tensorg} \Big), &\qquad & r, s =1, 2,\dots, n. \end{eqnarray}

The equations \eqref{brackets1} and \eqref{brackets2} are called the Cartan equations when written in terms of forms. Let $\theta = (\theta_1, \dots, \theta_n)^\dagger$ be the $\real^n$-valued one-form on $F(M)$ defined by
$$\theta |_f(\tilde v) = f^{-1} \chi_* \tilde v, \qquad \tilde v \in T_f\FSO(M).$$
We remark that $\theta( \sum_{i=1}^n r_i X_i) = (r_1, \dots, r_n)^\dagger$, while $\theta(\xi(A)) = 0.$ If $\omega = (\omega_{\alpha\beta})$ is the connection form, then both $\theta$ and $\omega$ are constant on vector fields $X_j$, $1\leq j \leq n$, and $\xi(A)$, $A \in \mathfrak{so}$, so by \eqref{brackets1} and \eqref{brackets2} we have
\begin{equation} \label{CartanEquations} d\theta_i = - \sum_{k=1}^n \omega_{ik} \wedge \theta_k, \qquad d \omega_{rs} = - \sum_{k=1}^n \omega_{r k} \wedge \omega_{ks} + \Omega_{rs}.\end{equation}
 For more information on frame bundles, see e.g. \cite[Chapter~2.1]{Hsu02}.

Let $\FSO(\widehat M)$ be the oriented orthonormal frame bundle of $(\widehat M, \widehat \tensorg)$ and write $\tilde Q = \FSO(M) \times \FSO(\widehat M)$. We will use $\chi$ and $\widehat \chi$ for the projections to $M$ and $\widehat M$, respectively. Define a principal $\SO(n)$-bundle over $Q$ by
 $$\begin{array}{rccc}
 \proj : & \tilde Q & \to & Q \\ & (f, \hat f) & \mapsto & \hat f \circ f^{-1}
 \end{array}.
 $$
 
We want to lift our problem to the manifold $\tilde Q$. We will use the following reformulation of rolling without twisting or slipping.
 \begin{lemma}
 \cite[Corollary 1]{Gro12} \label{lemma:lift} Let $q(t)$ be an absolutely continuous curve in $Q$ which projects to the curve $(\gamma(t), \widehat \gamma(t))$ in $M \times \widehat M$. Then $q(t)$ is a rolling without twisting or slipping if and only if it is a projection of an absolutely continous curve $(f(t), \hat f(t))$ in $\tilde Q$ such that
\begin{enumerate}[\rm (a)]
\item $f_1(t), \dots, f_n(t)$ are parallel along $\gamma(t),$
\item $\hat f_1(t), \dots, \hat f_n(t)$ are parallel along $\widehat \gamma(t),$
\item $f(t)^{-1}(\dot \gamma(t)) = \hat f(t)^{-1}(\dot{ \widehat{\gamma}}(t))$ for almost every $t$.
\end{enumerate}
 \end{lemma}
Let $\theta$, $\omega$, $\Omega$, $X_j$ and $\xi(A)$ be defined on $\FSO(M)$ as above and define $\widehat \theta$, $\widehat \omega$, $\widehat \Omega$, $\widehat X_j$ and $\widehat \xi(A)$ similarly on $\FSO(\widehat M)$.  The submersion $\proj: \tilde Q \to Q$ then has vertical bundle $\widetilde \calV := \ker \proj_* = \{\xi(A) + \widehat \xi(A) \, \colon \, A \in \so(n) \}$.
We can then rewrite Lemma~\ref{lemma:lift} to say that an absolutely continuous curve $q(t)$ is a rolling without twisting or slipping if and only if it is the projection of a $\tilde D$-horizontal curve in $\tilde Q$ with
$$\tilde D = \ker \omega \cap \ker \widehat \omega \cap \ker (\theta - \widehat \theta) = \spn \{ X_j + \widehat X_j \, \colon \, j =1, \dots, n\}.$$
In the above formula, the kernels of the three one-forms represent respectively requirement (a), (b) and (c) of Lemma~\ref{lemma:lift} in that order. As $\tilde D$ is a subbundle of the Ehresmann connection
$$\widetilde \calH = \spn \{ X_j, \widehat X_j, \xi(A) - \widehat \xi(A) \, \colon \,  \, A \in \so(n), \, j=1, \dots, n \}$$
on $\proj$, it follows that $\tilde D$ is the horizontal lift of $D$ with respect to this mentioned connection.

From Theorem~\ref{th:main}~(a), we know that rather than solving the Hamiltonian systems of $H_{sR}$ on $Q$, we can look for integral curves of $\widetilde H = H_{sR} \circ \proj^2 = H_{\tensorg} \circ \chi^2$ which are contained in $\Ann(\tilde \calV)$. We remark that $\proj^2$ is defined relative to the Ehresmann connection $\widetilde \calH$, while $\chi^2$ is defined relative to $\tilde D$ on $\chi: \tilde Q \to M$. Using Corollary~\ref{cor:Riemann}, we know that our solutions are of the form
$$\nabla_{\dot \gamma} \dot \gamma(t) = \sharp \beta(t) \calR(\dot \gamma(t), \newbullet), \qquad \bnabla_{\dot \gamma} \beta(t) = 0,$$
where $\calR$ and $\bnabla$ are now defined with respect to $\tilde D$ and $\beta(t) \in \Ann(D) \cap \Ann(\tilde \calV)$.

\subsubsection{Step 2: Curvature and parallel transport of $\tilde D$} 
Since $\beta(t)$ is a curve in $\Ann(\widetilde D) \cap \Ann(\tilde \calV)$, we can write
$$\beta(t) = \sum_{j=1}^n v_j(t) (\theta_j - \widehat \theta_j)|_{f(t), \hat f(t)} + \sum_{r, s =1}^n \Lambda_{rs}(t) (\omega_{rs} - \widehat \omega_{rs})|_{f(t), \hat f(t)},$$
where $\Lambda_{rs}(t) = - \Lambda_{sr}(t)$. We observe that for any $Y \in \Gamma(TM)$,  we have $hY = \sum_{j=1}^n \langle f_i, Y \rangle_{\tensorg} (X_i + \widehat X_i)$ and so by \eqref{brackets1} and \eqref{brackets2},
\begin{eqnarray*}
\bnabla_Y \widehat X_j |_{f,\hat f} & = & - \sum_{i=1}^n \langle f_i, Y \rangle_{\tensorg} \,  \widehat \xi(\widehat \Omega(X_i, X_j)) |_{f,\hat f} \, , \\
\bnabla_Y \xi(A)  |_{f,\hat f} = - \bnabla_Y \widehat \xi(A) |_{f,\hat f} & = & - \sum_{i,j=1}^n \langle f_i, Y\rangle_{\tensorg} \,  A_{ij} \widehat X_j |_{f,\hat f}\, ,
\end{eqnarray*}
and hence
\begin{eqnarray*}
\bnabla_Y (\theta_j - \widehat \theta_j) |_{f,\hat f} & = & - \sum_{i=1}^n \langle f_i, Y \rangle_{\tensorg} (\omega_{ij} - \widehat \omega_{ij})  |_{f,\hat f} \, , \\
\bnabla_Y (\omega- \widehat \omega) |_{f,\hat f}  & = & -\sum_{i,j=1}^n \langle f_i, Y\rangle_{\tensorg} \, \widehat \Omega(\widehat X_i, \widehat X_j) \widehat \theta_j|_{f,\hat f} \, .
\end{eqnarray*}
This allows us to write
\begin{align*}
0 = \bnabla_{\dot \gamma} \beta(t) = &\sum_{j=1}^n \left( \dot v_j(t) + \sum_{i,r,s =1}^n \langle f_i(t), \dot \gamma(t) \rangle_{\tensorg} \, \Lambda_{rs}(t) \widehat \Omega_{rs}(\widehat X_i, \widehat X_j)  \right)  (\theta_j- \widehat \theta_j)|_{f(t), \hat f(t)} \\
& + \sum_{r,s=1}^n \left( \dot \Lambda_{rs}(t) - \langle f_r(t), \dot \gamma(t) \rangle_{\tensorg} \, v_{s}(t) \right) (\omega_{rs} - \widehat \omega_{rs})|_{f(t), \hat f(t)}.
\end{align*}

Define a vector field $V(t)$ and a two-vector field $\Lambda(t)$ along $\gamma(t)$ by respectively $V(t) = \sum_{j=1}^n v_i(t) f_j(t)$ and $\Lambda(t) = \sum_{r,s=1}^n \Lambda_{rs}(t)  f_r(t) \wedge f_s(t).$ We see that
\begin{align*}
\nabla_{\dot \gamma} \Lambda(t) &= \sum_{r,s=1}^n \langle f_r(t), \dot \gamma(t) \rangle_{\tensorg} \, v_s(t) f_r(t) \wedge f_s(t) = \dot \gamma(t) \wedge V(t), \\
\nabla_{\dot \gamma}  V(t) & = - \sum_{i,j,r,s=1}^n \langle f_i(t), \dot \gamma(t) \rangle_{\tensorg} \, \Lambda_{rs}(t) \widehat \Omega_{rs}(\widehat X_i, \widehat X_j) f_j(t)  \\
& = \sum_{i,j,r,s=1}^n \langle f_i(t), \dot \gamma(t) \rangle_{\tensorg} \, \Lambda_{rs}(t) \left\langle \widehat{R}(\hat f_i , \hat f_j) \hat f_r \wedge \hat f_s \right\rangle_{\widehat \tensorg} f_j(t) \\
& = q(t)^{-1} \widehat{R}(q(t) \Lambda(t)) q(t) \dot \gamma(t) .
\end{align*}
Finally, since the curvature $\calR$ of $\widetilde D$ is determined by
$$\calR(X_i + \widehat X_i, X_j + \widehat X_j) = - \xi(\Omega(X_i, X_j)) - \widehat \xi(\widehat \Omega(\widehat X_i, \widehat X_j)),$$
we obtain
\begin{align*}
& \nabla_{\dot \gamma} \dot \gamma(t) = \sharp \beta(t) \calR(\dot \gamma(t),\newbullet ) \\
= & - \sum_{i,j,r,s=1}^n \langle f_i(t) , \dot \gamma(t) \rangle_{\tensorg} \Lambda_{rs}(t) \left( \Omega_{rs}(X_i, X_j) - \widehat \Omega_{rs}(\widehat X_i, \widehat X_j) \right) f_j(t) \\
= & R(\Lambda(t)) \dot \gamma(t) - q(t)^{-1} \widehat{R}(q(t) \Lambda(t)) q(t) \dot \gamma(t).
\end{align*}
This completes the proof.

\subsection{Lower dimensional cases} We look at the special case for solutions of the rolling problem in two and three dimensions. 

\subsubsection{The two-dimensional case} 
Since $\|\dot \gamma\|_{\tensorg}^2$ is constant along solutions of {\sf (Opt)}, we can without loss of generality assume that $e_1(t) := \dot \gamma(t)$ is a unit vector field.
Since $M$ is an oriented manifold, there exists a unique unit vector field $e_2(t)$ such that $\{e_1(t), e_2(t)\}$ is a positively oriented orthonormal basis along $\gamma(t)$. Let $K(x)$ and $\widehat K(\widehat x)$ be the Gaussian curvatures of respectively~$M$ and~$\widehat M$. Our assumption {\sf (A)} is that $K - \widehat K$ never vanishes.

Let $\Lambda(t)$ and $V(t)$ be as in Theorem~\ref{th:main}. Define functions $\Lambda(t), v_1(t)$ and $v_2(t)$ by
$$\Lambda(t) = \Lambda(t) e_1(t) \wedge e_2(t), \qquad V(t) = v_1(t) e_1(t) + v_2(t) e_2(t).$$
Let $\kappa(t)$ be the geodesic curvature of $\gamma(t)$, defined by
$$\kappa(t) = \langle \nabla_{\dot \gamma} e_1(t), e_2(t)\rangle_{\tensorg} = - \langle e_1(t), \nabla_{\dot \gamma} e_2(t) \rangle_{\tensorg}.$$
\begin{corollary} \label{cor:2dim}
\begin{enumerate}[\rm (a)]
\item If $\widehat K$ is constant, then $v_1(t)^2 + v_2(t)^2 + \widehat K \Lambda(t)^2$ is also constant.
\item If $\widehat K \equiv 0$, then there are constants $A$ and $\theta_0$ such that $\theta(t) = \theta_0 + \int_0^t \kappa(s) ds$ is a solution to
$$\frac{d}{dt} \left( \frac{\dot \theta(t)}{K(t)} \right) = A \sin \theta(t), \qquad K(t) := K(\gamma(t)).$$
\item If both $K$ and $\widehat K$ are constant, then there are constants $A$ and $\theta_0$ such that $\theta(t) = \theta_0 + \int_0^t \kappa(s) ds$ is a solution to
$$\ddot \theta(t) = A \sin \theta(t).$$ 
\end{enumerate}
\end{corollary}
We note that Corollary~\ref{cor:2dim}~(c) appeared in \cite{Jur93} for the special case of a ball rolling on a plane.

\begin{proof}
Define, for now, $\theta(t) = \int_0^t \kappa(s) ds$. Write $K(t) = K(\gamma(t))$ and $\widehat K(t) = \widehat K(\widehat \gamma(t))$. By Theorem~\ref{th:optimal},
\begin{align*} \dot \theta(t) = \langle \nabla_{\dot \gamma} \dot \gamma(t) , e_2(t)\rangle_{\tensorg} &= - \Lambda(t) (K(t) - \widehat K(t)), \qquad \dot \Lambda(t) = v_2(t), \\
\langle \nabla_{\dot \gamma} V(t) , e_1(t)\rangle_{\tensorg} &= \dot v_1(t) - \dot \theta(t) v_2(t) = 0 , \\
\langle \nabla_{\dot \gamma} V(t), e_2(t) \rangle_{\tensorg} & = \dot v_2(t)  + \dot \theta(t) v_1(t) = - \Lambda(t) \widehat K(t) .\end{align*}
We remark that from these equations,
$$\frac{d}{dt} \left(v_1(t)^2 + v_2(t)^2 - \widehat K(t) \Lambda(t) \right) = \dot {\widehat K}(t) \Lambda(t)^2,$$
which gives us (a). Since we know $K(t) - \widehat K(t) \neq 0$, we can write
$$\frac{d}{dt} \left(\frac{\dot \theta(t)}{K(t) - \widehat K(t)} \right) = - v_2(t),$$
$$\dot v(t) = \left( \begin{array}{cc} 0 & - \dot \theta(t) \\ \dot \theta(t) & 0 \end{array} \right) v(t) + \frac{\widehat K(t)}{K(t) - \widehat K(t)} \left( \begin{array}{c} 0 \\  \dot \theta(t) \end{array} \right),$$
where $v(t) = (v_1(t), v_2(t))^\dagger$. We look at the special cases of (b) and (c).

If $\widehat K(t) = 0$, then $v_2(t)$ can be written on the form $A \sin(\theta(t) + \theta_0)$ and the result follows. If the curvatures are both constant, we have
$$\dot v(t) = \left( \begin{array}{cc} 0 & - \dot \theta(t) \\ \dot \theta(t) & 0 \end{array} \right) \left( \begin{array}{c} v_1(t) + \frac{\widehat K}{K - \widehat K} \\ v_2(t) \end{array} \right) .$$
It again follows that any solution of $v_2(t)$ can be written as $A/ (K - \widehat K) \sin (\theta(t) + \theta_0).$
Finally, redefine $\theta(t)$ as $\theta(t) = \theta_0 + \int_0^t \kappa(s) ds$.
\end{proof}

\subsubsection{The three-dimensional case}
On a Riemannian three-dimensional manifold $M$, we can identify two-vectors with vectors and hence also define a cross product on $M$ using the Hodge star map $\star$. Define $v \times w = \sharp \star (\flat v \wedge \flat w)$ and note the relation $\langle v \times w_1, w_2 \rangle_{\tensorg} = - \langle w_1, v \times w_2 \rangle_{\tensorg}$. The curvature $R$ gives us a symmetric bilinear tensor $\tensorr$ on the tangent bundle, uniquely determined by the relation
$$\langle R(v_1, w_1) v_2, w_2 \rangle_{\tensorg} = -\langle v_1 \times w_1, v_2 \times w_2 \rangle_{\tensorr}.$$
The sign is chosen such that $\tensorr$ is positive definite if and only if $(M, \tensorg)$ has positive sectional curvature. By slight abuse of notation, we will also use $\tensorr:TM \to TM$ for the endomorphism of the tangent bundle, determined by the relation
$$\langle v,w \rangle_{\tensorr} =\langle v,  \tensorr(w) \rangle_{\tensorg}.$$
Define $\widehat \tensorr$ similarly on $\widehat M$. Assumption {\sf (A)} will in this case mean that
$$v,w \mapsto \langle v, \tensorr(w) \rangle_{\tensorg} - \langle v, q^{-1} \widehat \tensorr(q w) \rangle_{\tensorg}, \qquad v,w \in T_xM,$$
is a non-degenerate bi-linear map for any $x \in M$ and linear isometry $q \in Q_x.$

We now look at the equations in Theorem~\ref{th:optimal}. Write $X(t) = \sharp \star \flat \Lambda(t)$ for the identification of the two-vector field $\Lambda(t)$ with a vector field~$X(t)$ along the curve~$\gamma(t)$. Then we have equations.
$$\nabla_{\dot \gamma} \dot \gamma(t) = \dot \gamma(t) \times \left(\tensorr (X(t)) - q(t)^{-1} \widehat \tensorr (q(t) X(t)) \right),$$
$$\nabla_{\dot \gamma} X(t) = \dot \gamma(t) \times V(t), \qquad \nabla_{\dot \gamma} V(t) = \dot \gamma(t) \times q(t)^{-1} \widehat \tensorr( q(t)X(t)).$$

\subsection{Rolling on a space form} \label{sec:SpaceForm}
Let $\widehat M= \Sigma_{n,\widehat K}$ denote the simply connected, complete $n$-dimensional Riemannian manifold of constant sectional curvature. In other words, $\Sigma_{n,0}$ is isometric to $\real^n$ with the euclidean metric, $\Sigma_{n,\widehat K}$ is an $n$-dimensional sphere of radius ${\widehat K}^{-1/2}$ when $\widehat K > 0$ and if $\widehat K < 0$ then $\Sigma_{n,\widehat K}$ is isometric to an appropriately scaled version of the hyperbolic space. When rolling on such spaces, the configuration space $Q$ can be given a principal bundle structure by \cite[Proposition 4.1]{ChKo12b}. We will describe briefly how our equations look with respect to this structure.

If $G = \Isom(\Sigma_{n, \widehat K})$ is the isometry group of $ \Sigma_{n,\widehat K}$, then there exists a left action of $G$ on $\FSO(\Sigma_{n,\widehat K})$ defined by
$$f\in \FSO(\Sigma_{n, \widehat K})_{\widehat x} \mapsto \varphi \cdot f : = \varphi_* \circ f \in \FSO(\Sigma_{n, \widehat K})_{\varphi(\widehat x)}.$$
In the case of $\Sigma_{n, \widehat K}$, this is a transitive action. Choose a point $o \in \Sigma_{n,\widehat K}$ and a reference frame $\hat f^o \in \FSO(\Sigma_{n,\widehat K})_o$. Then we may identify $\FSO(\Sigma_{n,\widehat K})$ with $G$ by identifying $\hat f = \varphi \cdot  \hat f^o$ with $\varphi$. Furthermore, we may identify the right action of $\SO(n)$ on $\FSO(\Sigma_{n,\widehat K})$ by the right action on $G$ by the stabilizer $G_o$ of $o \in \Sigma_{n, \widehat K}$. Indeed, if $\hat f = \varphi \cdot \hat f^o$, then $\hat f \cdot a = \varphi \cdot \varphi_a \cdot \hat f^o$ where $\varphi_a$ is the unique isometry satisfying
\begin{equation} \label{SOGo} \varphi_a(o) = o, \qquad \varphi_a \cdot \hat f^o = \hat f^o \cdot a.\end{equation}

Let $\frakg$ be the Lie algebra of $G$ and let $\frakk$ be the sub-algebra corresponding to the subgroup $G_o$. Since isometries preserve parallel transport with respect to the Levi-Civita connection, the vector fields $\widehat X_j$ are preserved under the action of $G$, i.e., they are left invariant vector fields. Hence, if we write $\frakp = \spn_{1 \leq j \leq n} \{ \widehat X_1|_{\hat f^o}, \dots, \widehat X_1|_{\hat f^o} \}$, then $\frakg = \frakp \oplus \frakk$ and this direct sum corresponds to the splitting $\widehat \calH^\nabla \oplus \ker \widehat \chi$. We refer to \cite[Chapter 9]{ONe83} for the details, noting that we may identify $\frakg$ with the Lie algebra matrices on the form
$$\left( \begin{array}{cc} A & v \\ - \widehat K v^\dagger & 0 \end{array} \right), \qquad A \in \so(n), v \in \real^n.$$
In this case, $\frakp$ and $\frakk$ can be identified with the subspaces $A =0$ and $v =0$ respectively. Note that the cases $\widehat K$ equal to $1$,$ -1$ or $0$ correspond to the cases where $\frakg$ may be identified with $\so(n+1)$, $\so(1,n+1)$ and $\se(n)$, respectively.

Consider now the case of an $n$-dimensional Riemannian manifold~$M$ rolling on~$\Sigma_{n,\widehat K}$ without slipping or twisting. Then we may identify $\tilde Q = \FSO(M) \times \FSO(\Sigma_{n,\widehat K})$ with $\FSO(M) \times G$ and we can consider $\chi: \FSO(M) \times G \to M$ as a $\widetilde G$-principal bundle with the action of $\widetilde G = \SO(n) \times G$ on the right. The action of this principal bundle is given by
$$(f, \varphi) \cdot (a, \varphi_1) = (f \cdot a, \varphi_1^{-1} \cdot \varphi \cdot \varphi_a), \qquad (a, \varphi_1) \in \SO(n) \times G = \widetilde G.$$
The subbundle $\widetilde D$ is invariant under the action of $\widetilde G$. By dividing out by elements $(a,\id_{\Sigma_{n,\widehat K}}) , a \in \SO(n)$, we get that $D$ is a principal connection on the principal $G$-bundle $\pi: Q \to M$. However, it is still simpler to do our computations on $\FSO(M) \times G$.

Define $\xi$ and $\widehat \xi$ as in Section~\ref{sec:LiftingProblem}, and define $\xi^{\widetilde G}$ as
$$\xi^{\widetilde G}(A,B)|_{f,\varphi} = \left. \frac{d}{dt} (f,\varphi) \cdot (\exp(tA), \exp(tB)) \right|_{t=0}$$
for any $A \in \so(n), B \in \frakg$ and $(f,\varphi) \in \FSO(M) \times G$. Note that
$$\xi^G(A,B) |_{f,\varphi} = \xi(A) |_{(f,\varphi)} + \widehat \xi(A) |_{(f,\varphi)} -  B \cdot \varphi.$$
Furthermore,
$$\widehat \xi(A)|_{f,\varphi} = \varphi \cdot \left( \begin{array}{cc} A & 0 \\ 0 & 0 \end{array} \right).$$
 Write $\omega^{\widetilde G}$ and $\Omega^{\widetilde G}$ for the connection form and the curvature form of $\widetilde D$, respectively. Then the values of $\omega^{\widetilde G}$ in $\so(n) \oplus \frakg$ are determined by the relations
$$\omega^{\widetilde G}\left(\sum_{i=1}^n v_i \widehat X_i |_{f,\varphi} \right) =- \omega^{\widetilde G}\left( \sum_{i=1}^n v_i X_i |_{f,\varphi} \right) = 0 \times \left(\begin{array}{cc} 0 & v \\ - \widehat K v^\dagger & 0 \end{array} \right),$$
$$\omega^{\widetilde G}(\xi(A)|_{f,\varphi}) = A \times  \Ad(\varphi) \left(\begin{array}{cc} A & 0 \\ 0 & 0 \end{array} \right),$$
$$\omega^{\widetilde G}(\widehat{\xi}(A)|_{f,\varphi}) = 0 \times - \Ad(\varphi)\left(\begin{array}{cc} A & 0 \\  0 & 0 \end{array} \right).$$
As a consequence, $\Omega^{\widetilde G}$ is given by
$$\Omega^{\widetilde G}(X_i + \widehat X_i, X_j + \widehat X_j) = \Omega(X_i, X_j) \times  \Ad(\varphi)\left( \begin{array}{cc} \Omega(X_i, X_j) - \widehat \Omega(\widehat X_i, \widehat X_j) & 0 \\ 0 & 0 \end{array}  \right).$$

We next use the formulas of Example~\ref{ex:Gauge}. Let $q(t)$ be an optimal solution to the rolling problem with $q(0) = q_0$ with projection $\gamma(t)$ in $M$. Let $(f^o,\hat f^o) \in \FSO(M) \times \FSO(\Sigma_{n,\widehat K})$ be such that $q_0 = \hat f^o \circ (f^o)^{-1}$. Use $\widehat f^o$ as the initial frame to identify $\FSO(\Sigma_{n,\widehat K})$ with $G = \Isom(\Sigma_{n,\widehat K})$. Let $\beta(t)$ be contained in $\Ann(\widetilde \calV)$ and satisfying $\bnabla_{\dot \gamma} \beta(t) =0$, where $\widetilde \calV = \ker \proj_*$. Then $\beta(t)$ can be written, with slight abuse of notation, as $\beta \pr_{\frakg} \omega^{\widetilde G}(\newbullet)|_{\gamma(t)}$ where $\pr_{\frakg}$ is the projection $\mathfrak{\so}(n) \times \frakg \to \frakg$ and $\beta \in \frakg^*$ is constant.

Let $f(t)$ be defined by parallel transport of $f^o$ along $\gamma(t)$ and let $\varphi(t)$ be defined such that $q(t) f(t) = \varphi(t) \cdot \hat f^o$. If $u(t) = f(t)^{-1}(\dot \gamma(t))$ then
\begin{equation} \label{PhiDer} \dot \varphi(t) = \varphi(t) \left( \begin{array}{cc} 0 & u(t) \\ - \widehat K u(t)^\dagger & 0 \end{array} \right) .\end{equation}
Furthermore, by Example~\ref{ex:Gauge}, we know that $\gamma(t)$ is given as the solution of
\begin{align*} \label{principalSol}
\nabla_{\dot \gamma} \dot \gamma(t) = & - \sharp \beta \pr_{\frakg} \Omega^{\widetilde G}(h_{f(t),\varphi(t)} \dot \gamma(t), \newbullet) \\  \nonumber
= & - \sum_{i,j=1}^n \langle \dot \gamma(t), f_i(t)\rangle_{\tensorg} \beta \Ad(\varphi(t)) (\Omega(X_i, X_j) - \widehat \Omega(\widehat X_i, \widehat X_j)) f_j(t).
\end{align*}
Hence, if $u(t) = f(t)^{-1}(\dot \gamma(t))$ and $\Omega^{ij}(t) = \Omega(X_i, X_j)|_{f(t)}$, we can write
\begin{equation} \label{GaugeForm} \dot u_j(t) = \sum_{i=1}^n u_i(t) \beta \Ad(\varphi(t)) \left( \begin{array}{cc} - \Omega^{ij}(t) + \widehat K W_{ij} & 0 \\ 0 & 0 \end{array}\right), \qquad \beta \in \frakg^*,\end{equation}
where $W_{ij}$ is the matrix whose coefficient in row $r$ and column $s$ is given by $\delta_{r,i} \delta_{s,j} - \delta_{s,i} \delta_{r,j}$. We leave it to the reader to verify that the equations \eqref{PhiDer} and \eqref{GaugeForm} can be transformed into the equations \eqref{MatrixEquation}.

\subsection{Comparison to elastica}
Consider the following problem. Let $M$ be an oriented Riemannian manifold with oriented orthonormal frame bundle $\FSO(M)$. We want to consider the following problem. Let $e(t)$ be a curve in $\FSO(M)$, given by the vector fields $e_1(t), \dots, e_n(t)$ along a curve $\gamma(t)$.  Assume that $\gamma(t)$ is parametrized by arc length and that $\dot \gamma(t)= e_1(t)$. Given initial and final conditions, we want to find the curves $e:[0,T] \to \FSO(M)$ such that the following cost functional is minimized:
\begin{equation} \label{costE} E\big[e(\newbullet)\big] = \frac{1}{2} \sum_{i=1}^n \int_0^T \| \nabla_{\dot \gamma} e_i(t) \|^2_{\tensorg} \, dt.\end{equation}
Using the approach of \cite{JuMP02} we solve this problem, see also \cite[Section~7]{Jur11}.
\begin{proposition} \label{prop:MinimalFrame}
If $\gamma(t)$ is the projection of a minimizer in the above problem, then there exist a two-vector field $\Lambda(t)$ and  vector field $W(t)$ along $\gamma(t)$ such that
$$\nabla_{\dot \gamma} \dot \gamma(t) =  \sharp \iota_{\dot \gamma(t)} \flat \Lambda(t),$$
$$\nabla_{\dot \gamma} \Lambda(t) =  \dot \gamma(t) \wedge W(t), \qquad \nabla_{\dot \gamma} W(t) = R(\Lambda(t)) \dot \gamma(t).$$
Furthermore,  $\nabla_{\dot \gamma} e_k(t) = \sharp \iota_{e_k(t)} \flat \Lambda(t).$
\end{proposition}
Let us first remark that solutions of the above problem coincide with solutions of {\sf (Opt)} for the special case when $M$ and $\widehat M$ both have constant sectional curvature, i.e., if $R(\Lambda(t))v = - K \sharp \iota_v \flat \Lambda(t)$ and $\widehat R(\Lambda(t))v = - \widehat K \sharp \iota_v \flat \Lambda(t)$ for some number $K$ and~$\widehat K$. Hence, in this case, finding a minimal rolling coincides with finding a ``most parallel'' orthonormal frame. In general, solutions of the above problem and {\sf (Opt)} do not coincide, even in the locally symmetric case. It would be interesting to investigate what curves (in addition to geodesics) that both appear as solutions of the ``most parallel'' orthonormal frame problem and the optimal rolling problem for general manifolds $M$ and $\widehat M$, but this goes beyond the scope of this paper.

The problem of a frame $e(t)$ minimizing cost $E$ in \eqref{costE} is linked to the problem of finding \emph{elastica} in $M$. From the equations $\nabla_{\gamma} e_k(t) = \sharp \iota_{e_k} \flat \Lambda(t)$ and $\nabla_{\dot \gamma} \Lambda(t) = \dot \gamma(t) \wedge W(t)$, it follows that $\langle e_r, \nabla_{\dot \gamma} e_s \rangle_{\tensorg}$ is constant whenever $r ,s \geq 2$. If all of these constants vanish, then the curve $\gamma(t)$ is an elastic in $M$. We again refer to \cite{JuMP02} and \cite[Section 7]{Jur11} for details. See also \cite{Zim05,JuZi08}.

\begin{proof}[Proof of Proposition~\ref{prop:MinimalFrame}]
Let the vector fields $X_1, \dots, X_n$ be as in \eqref{basisFSO}. Note that for any curve $e(t)$ in $\FSO(M)$ with projection~$\gamma(t)$, we have
\begin{eqnarray*} \dot e(t) & =&  \sum_{i=1}^n \langle \dot \gamma(t), e_i(t) \rangle_{\tensorg} X_i|_{\gamma(t)} + \xi(A(t)), \\ A(t) &=& (A_{rs}(t)) = \Big( \langle e_r(t), \nabla_{\dot \gamma} e_s(t) \rangle_{\tensorg}\Big).\end{eqnarray*}
We can consider this as an optimal control problem
$$\begin{array}{m{3cm}m{4cm}}  \tiny \xymatrix{ \FSO(M) \times \so(n) \ar[rr]^{\sf b} \ar[rd]_{\sf a} & & T\FSO(M) \ar[ld]^{p^{T\FSO(M)}} \\ & \FSO(M) } &${\sf b}(f, A) = X_1|_f + \xi(A)|_f,$ \end{array}$$
where we minimize the cost functional of the cost function
$${\sf c}(f,A) = \frac{1}{2} \langle A, A \rangle_{\so(n)}, \qquad \langle A, B \rangle_{\so(n)} = - \tr AB =  \sum_{r,s=1}^n A_{rs} B_{rs}.$$
We have the corresponding PMP-Hamiltonian,
$$H_{\nu,A}(\lambda) = \lambda(X_1) + \lambda(\xi(A)) + \frac{\nu}{2} \langle A, A \rangle_{\so(n)}.$$
If we give $T^*\FSO(M)$ the coordinates
$$\lambda = \sum_{i=1}^n \lambda_i \theta_i |_x + \sum_{r,s=1}^n \Lambda_{rs} \omega_{rs} |_x, \quad \Lambda_{rs} = - \Lambda_{sr}, \quad  \lambda \in T^*M,$$
then by \eqref{CartanEquations}, we get
\begin{align*} \vec{H}_A |_\lambda & = X_1|_\lambda + \xi(A)|_\lambda + \sum_{j=1}^n \left( - \sum_{r,s=1}^n \Lambda_{rs} \Omega_{rs}^{1j}(x) + \sum_{k=1}^n \lambda_k A_{kj} \right) \partial_{\lambda_j} \\
& \quad + \sum_{r=1}^n \frac{\lambda_r}{2} \left( \partial_{\Lambda_{1r}} - \partial_{\Lambda_{r1}} \right) - 2\sum_{k,r,s=1}^n A_{rk} \Lambda_{ks} \partial_{\Lambda_{rs}} ,\end{align*}
where $\Omega^{ij} = (\Omega_{rs}^{ij}) = \Omega(X_i, X_j) $.

Any abnormal extremal $\lambda(t)$ with projection $\gamma(t)$ must by Theorem~\ref{th:PMP}~(iii) be on the form $\lambda(t) = \sum_{j=2}^n \lambda_j(t) \theta_j |_{\gamma(t)}$. Furthermore, Theorem~\ref{th:PMP}~(ii) tells us that
$0 = \dot \Lambda_{1r}(t) = \lambda_r(t)/2$ for any $1 \leq r \leq n$, so $\lambda_r(t) = 0$. Hence, there are no abnormal extremals.

Turning to normal extremals and choosing $\nu =-1$, if $(\gamma(t), A(t))$ is the optimal control corresponding to normal extremal $\lambda(t)$, then $A(t) =  (\Lambda_{rs}(t))$ by (iii) in Theorem~\ref{th:PMP}~(iii). Finally, it follows from Theorem~\ref{th:PMP}~(ii) that
$$\dot \gamma(t) = e_1(t), \qquad \langle e_r(t), \nabla_{\dot \gamma} e_s(t) \rangle_{\tensorg} = \Lambda_{rs}(t),$$
$$\dot \lambda_j(t) = \sum_{k=1}^n \lambda_k(t) \Lambda_{kj}(t) - \sum_{r,s=1}^n \Lambda_{rs}(t) \Omega^{1j}_{rs}(\gamma(t)) ,$$
$$\dot \Lambda_{rs}(t) = - 2\sum_{k=1}^n \Lambda_{rk}(t) \Lambda_{ks}(t) - \frac{1}{2} \Big(\lambda_r(t) \delta_{1,s} - \lambda_s(t) \delta_{1,r} \Big).$$
Define $\Lambda(t) = \frac{1}{2} \sum_{r,s=1}^n \Lambda_{rs}(t) e_r(t) \wedge e_s(t)$ and $W(t) = \frac{1}{2} \sum_{j=1}^n \lambda_k(t) e_k$ for the result. Note that
$$\Lambda(t) =  \frac{1}{2} \sum_{r,s=1}^n \Lambda_{rs}(t) e_r(t) \wedge e_s(t) = \frac{1}{2}\sum_{k=1}^n e_k(t) \wedge \nabla_{\dot \gamma} e_k(t).$$
It follows that $\nabla_{\dot \gamma} e_k(t) = \sharp \iota_{e_k(t)} \flat \Lambda(t).$
\end{proof}

\section{Proofs} \label{sec:proofs}
\subsection{Proof of Theorem \ref{th:main} and necessary theory} \label{sec:proofmain}
The idea of the proof is as follows. Let $\pi^2:T^*Q \to T^*M$ be the submersion defined in Section~\ref{sec:SubEhresmann} relative to some Ehresmann connection $\calH$ on $\pi:Q \to M$. Elements in $T^*Q$ and $T^*M$ are denoted by $\tilde \lambda$ and $\lambda$, respectively. Let $\theta$ be the Liouville one-form on $T^*M$, i.e.,
$$\theta(w) = \lambda(p^M_* w), \qquad w \in T_\lambda T^*M,$$
and use $\sigma = - d\theta$ for the canonical symplectic form. Let $\tilde \theta$ and $\tilde \sigma$ be defined similarly on $T^*Q$. If we look at the Hamiltonian vector fields of $H$ and $\tilde H = H \circ \pi^2$, they are related by
\begin{equation} \label{prelift} \tilde \sigma\left( \vec{\tilde H}, \tilde u \right) = \sigma( \vec{H}, \pi^2_* \tilde u), \qquad \tilde u \in T(T^*Q).\end{equation}
Corresponding to the relation \eqref{prelift}, there is actually a ``symplectic lifting map''
$$\liftmap_{\tilde \lambda} : T_{\lambda} T^*M \to T_{\tilde \lambda} T^*Q, \qquad \pi^2(\tilde \lambda) = \lambda,$$
on each tangent space. In Section~\ref{sec:lifted} we will show that the maps $\liftmap_{\tilde \lambda}$ give us an Ehresmann connection $\calH^2$ on $\pi^2:T^*Q \to T^*M$. We will show that the $S_{\tilde \lambda}$ are not the horizontal lifts of this Ehresmann connection in Section~\ref{sec:h2} , since $\pi^2_* \, \liftmap_{\tilde \lambda}$ is not the identity in general. However, the difference between $\liftmap_{\widetilde \lambda}$ and the true horizontal lift is exactly described by the curvature $\calR$ of $\calH$. We can use this fact to complete the proof in Section~\ref{sec:proofmainsec}. A description in local coordinates is included in Section~\ref{sec:localcoordinates}.

\subsubsection{An induced Ehresmann connection on $\pi^2$} \label{sec:lifted}
For any $\tilde \lambda \in T^*Q$ with $\pi^2(\tilde \lambda) = \lambda$, let $\liftmap_{\tilde \lambda}: T_\lambda T^*M \to T_{\tilde \lambda} T^*Q$, $w \mapsto S_{\tilde \lambda} w$ be the map defined such that 
$$\tilde \sigma(\liftmap_{\tilde \lambda} w, \tilde u) = \sigma(w, \pi^2_* \tilde u) \qquad \text{for any } \tilde u \in T_{\tilde \lambda} T^* Q.$$
In other words, let $I : T T^*M \to T^*T^*M$ be the symplectic isomorphism $I(w) = \sigma(w, \newbullet)$ and define $\tilde I$ analogously on $T^*Q$. Then $\liftmap_{\tilde \lambda}$ is the map
$$\xymatrix{ \liftmap_{\tilde \lambda}\colon  T_\lambda T^*M \ar[r]^-{I} & T^*_\lambda T^*M \ar[r]^{{\pi^2}^*} & T^*_{\tilde \lambda} T^*Q  \ar[r]^{\tilde I^{-1}} & T_{\tilde \lambda} T^*Q}.$$
This map gives us an Ehresmann connection on $\pi^2$.
\begin{lemma} \label{lemma:H2}
Define $\calV^2 = \ker \pi^2_*$ and let $\calH^2$ be its symplectic complement, i.e.,
$$\calH^2_{\tilde \lambda} = \left\{ \tilde w \in T_{\tilde \lambda} T^* Q \, \colon \, \tilde \sigma( \tilde w, \tilde u ) = 0 \text{ for any } \tilde u \in \calV^2_{\tilde \lambda} \right\}.$$

\begin{enumerate}[\rm (a)]
\item For any $\tilde \lambda \in T^*Q$, $\calH_{\tilde \lambda}^2$ is equal to the image of $\liftmap_{\tilde \lambda}$.
\item The map
$$\xymatrix{T(T^*Q) \ar[r]^-{p^Q_*} & TQ \ar[r]^{\pi_*}& TM}$$
restricted to $\calH^2$ is given by
$$\pi_* p^Q_*( \liftmap_{\tilde \lambda} w) = p^M_* w , \qquad \text{for any } \tilde \lambda \in T^*Q, w\in T_{\pi^2(\tilde \lambda)} T^*Q.$$
\item  $T(T^*Q) = \calH^2 \oplus \calV^2 $. Furthermore,
$$p^Q_* \calV^2 = \calV \quad \text{and} \quad p^Q_* \calH^2 = \calH.$$
\end{enumerate}
\end{lemma}
A subbundle of $T(T^*Q)$ is called \emph{symplectic} if $\sigma$ restricted to this subbundle is a non-degenerate two-form. Since $\calH^2$ is the symplectic complement of $\calV^2$, the result $T(T^*Q) = \calH^2 \oplus \calV^2$ is equivalent to~$\calV^2$, and hence also~$\calH^2$, being symplectic.

\begin{proof}
Recall that from the definition of the symplectic form, we have
\begin{equation}
\label{verticalSigma} \sigma(\vl_\lambda \alpha_0, w) = -\alpha_0(p^M_* w), \qquad \lambda, \alpha_0 \in T^*M, w \in T_\lambda T^*M.
\end{equation}

\begin{enumerate}[\rm (a)]
\item By definition $\tilde \sigma(\liftmap_{\tilde \lambda} w, \newbullet) = \sigma(w, \pi_*^2 \newbullet)$ vanishes on $\calV^2 = \ker \pi^2_*$. Since $\liftmap_{\tilde \lambda}$ is injective and its image is of rank equal to that of $\calH_{\tilde \lambda}^2$, it follows that it will also be surjective.
\item Observe that for any one-form $\alpha$ on $T^*M$, ${\pi^2}_* \vl \pi^* \alpha = \vl \alpha$ holds by definition of $\pi^2$. Hence, by \eqref{verticalSigma}, for any vector field~$W$ on~$T^*M$, we have
$$\alpha( p^M_* W) = \sigma(W, \vl \alpha) = \tilde \sigma(\liftmap W, \vl \pi^* \alpha) = \alpha(\pi_* p^Q_* \liftmap W).$$
Here, the vector field $\liftmap W$ on $T^*Q$ is defined by $SW |_{\tilde \lambda}= S_{\tilde \lambda} W |_{\pi^2(\tilde \lambda)}$. Since $\alpha$ and $W$ were arbitrary, the result follows.
\item Clearly, $p^Q_* \calV^2 = p^Q_* \ker {\pi^2}_* \subseteq \ker \pi_* = \calV$. Let $\zero$ denote the zero section of~$T^*Q$. Since $p_*^Q$ is a right inverse to the (injective) map $\zero_*: TQ \to T(T^*Q)$ and $\zero_*\calV \subseteq \calV^2$, we must have that $p^Q_*$ maps $\calV^2$ surjectively on $\calV$. To prove the analogous result for $\calH^2$, observe that for any $\tilde \lambda \in  T^*_q Q$ and $\tilde \alpha_0 \in \Ann(\calH)_q,$ we have $\vl_{\tilde \lambda} \tilde \alpha_0 \in \calV^2_{\tilde \lambda}$. Using \eqref{verticalSigma}, we know that any $\tilde w \in \calH_{\tilde \lambda}^2$ must satisfy
$$0 = -\tilde \sigma (\vl_{\tilde \lambda} \tilde \alpha_0, \tilde w) = \tilde \alpha_0(p^Q_* \tilde w) ,$$
implying that $p^Q_* \calH^2 \subseteq \calH$ since $\alpha_0 \in \Ann(\calH)$ was arbitrary. Finally, as $\pi_* p^Q_*$ is surjective on $TM$ and $\pi_*|_{\calH}$ is a linear isomorphism on every fiber, we must have that $p^Q_* \calH^2 = \calH$.

To complete the proof, we must show that $\calH^2$ and $\calV^2$ are indeed transverse. Assume that $\tilde w \in  (\calH^2 \cap \calV^2)|_{\tilde \lambda}.$ From our previous results, this intersection is contained in the kernel of $p^Q_*$. Hence, $\tilde w = \vl_{\tilde \lambda}\tilde \alpha_0$ for some $\tilde \alpha_0 \in T^*_{p^Q(\tilde \lambda)} Q$. However, since $\tilde \sigma(\vl_{\tilde \lambda} \alpha_0, \newbullet )$ must annihilate both $\calH^2$ and $\calV^2$ and since $\calH^2 \cup \calV^2$ is mapped surjectively onto $TM$, \eqref{verticalSigma} implies $\tilde \alpha_0 =0$. The result follows
\end{enumerate}
\end{proof}

\subsubsection{Horizontal lifts with respect to $\calH^2$} \label{sec:h2}
Lemma \ref{lemma:H2} (c) tells us that~$\calH^2$ is an Ehresmann connection on $\pi^2:T^*Q \to T^*M$. Let $h^2_{\tilde \lambda} w$ be the horizontal lift with respect to this connection. We want to see how this lift compares with the map $\liftmap_{\tilde \lambda}$.

\begin{lemma} \label{lemma:landh}
If
$$\xymatrix{ \tilde \lambda \in  T^*_q Q \ar@{|->}[r]^-{\pi^2} & \lambda \in T_{x}^* M} \quad \text{and} \quad  \xymatrix{ w \in T_\lambda T^*M  \ar@{|->}[r]^-{p^M_*}  & v \in T_x M }.$$
then the following relations hold
\begin{eqnarray} \label{ellandh1}
p_*^Q h^2_{\tilde \lambda} w & = &  p^Q_* \liftmap_{\tilde \lambda} w = h_q v, \\ \nonumber \\
\label{ellandh2} h^2_{\tilde \lambda} w &= & \liftmap_{\tilde \lambda} w - \vl_{\tilde \lambda}  \tilde \lambda \, \calR(h_q v, \newbullet ) .
\end{eqnarray}
As a consequence,
$${\pi^2}_* \liftmap_{\tilde \lambda} w = w + \vl_\lambda \tilde \lambda \, \calR(v, \newbullet ).$$
\end{lemma}
Written as a commutative diagram, 
Lemma \ref{lemma:landh} states that 
$$\begin{array}{m{3cm}m{3cm}m{3cm}}
\xymatrix{h^2_{\tilde \lambda} w \ar@{|->}[r]^-{{\pi^2}_*} \ar@{|->}[d]_{p^Q_*} & w \ar@{|->}[d]^{p^M_*}\\ h_q v \ar@{|->}[r]_-{\pi_*} & v} &
\begin{center} while \end{center} &
\xymatrix{\liftmap_{\tilde \lambda} w \ar@{|->}[r]^-{{\pi^2}_*} \ar@{|->}[d]_{p^Q_*} & w + \vl_\lambda \tilde \lambda \, \calR(v, \newbullet )\ar@{|->}[d]^{p^M_*}\\ h_q v \ar@{|->}[r]_-{\pi_*} & v} \end{array}.$$
Observe that $\vl_\lambda \tilde \lambda \, \calR(v, \newbullet )$ only depends on the projection of $\tilde \lambda$ to $\Ann(\calH)$. Hence, $h^2_{\tilde \lambda} w = \liftmap_{\tilde \lambda} w$ whenever $\tilde \lambda$ is in $\Ann(\calV)$.

\begin{proof}[Proof of Lemma \ref{lemma:landh}]
Equation \eqref{ellandh1} is immediate from the definition of horizontal lifts and Lemma \ref{lemma:H2} (b) and (c).

To prove \eqref{ellandh2}, let us split the Liouville form on $T^*Q$ into two parts $\tilde \theta = \theta^{\calH} + \theta^{\calV}$ where
$$\theta^{\calH}(w) = \tilde \lambda(\pr_{\calH} p^Q_* \tilde w), \qquad \theta^{\calV}(w) = \tilde \lambda(\pr_{\calV} p^Q_* \tilde w), \qquad \tilde w \in T_{\tilde \lambda} T^*Q.$$
We observe that for any $\tilde w \in T_{\tilde \lambda} T^*Q$,
\begin{align*} \theta^{\calH}(\tilde w)& = \tilde \lambda( \pr_{\calH} {p^Q}_* \tilde w) = \pi^2(\tilde \lambda)(\pi_* p^Q_* \tilde w) \\
& = \pi^2(\tilde \lambda)(p^M_* \pi^2_* \tilde w) = {\pi^2}^*(\theta)(\tilde w), \end{align*}
which in turn implies $\tilde \sigma = {\pi^2}^*\sigma - d\theta^{\calV}$. Define $ \calR^2$ as the curvature of $\calH^2$. We know from \eqref{ellandh1} that
$$p^Q_* \calR^2\left(h^2_{\tilde \lambda} w , h^2_{\tilde \lambda} u\right) = \calR\left(h_q v, h_q p^M_* u\right),$$
since for any pair of vector fields $W_j$, $j=1,2$, on $T^*M$ projecting to $X_j$ on $M$, we have
\begin{align*}
& p_*^Q \calR^2(h^2 W_1, h^2 W_2) = p_*^Q ([h^2 W_1, W_2] - h^2[W_1, W_2]) \\
& = [hX_1, hX_2]- h[X_1,X_2] = \calR(hX_1,hX_2)
\end{align*}
Here, we have used that $W_j$ is $p^M$-related to $X_j$ and $h^2 W_j$ is $p^Q$-related to $hX_j$.
Since $\calH^2$ is symplectic, $h^2_{\tilde \lambda} w$ is completely determined by the values of $\tilde \sigma(h^2_{\tilde \lambda} w, \newbullet)$ on vectors $h^2_{\tilde \lambda} u$, where $u \in T_\lambda TM$. We compute relations
\begin{align*}
& \tilde \sigma(h^2_{\tilde \lambda} w, h^2_{\tilde \lambda} u)  = \sigma(w,u) - d\theta^{\calV}(h^2_{\tilde \lambda} w, h_{\tilde \lambda}^2 u) \\
& = \sigma(w,u) + \theta(\calR^2( h^2_{\tilde \lambda} w, h_{\tilde \lambda}^2 u)) = \sigma(w,u) + \tilde \lambda \, \calR( h_qv, h_q p^M_* u) \\
& = \tilde \sigma(\liftmap_{\tilde \lambda} w, h^2_{\tilde \lambda} u) - \tilde \sigma \left(\vl_{\tilde \lambda} \tilde \lambda\, \calR( h_q v ,\newbullet) , h^2_{\tilde \lambda} u\right),
\end{align*}
which shows \eqref{ellandh2}
\end{proof}

We will state one of the observations made in the proof of Lemma \ref{lemma:landh} as a separate result, since we will need it later.
\begin{lemma} \label{lemma:symplecticsplit}
Let $\theta^{\calV}$ be the one-form $\theta^{\calV} |_{\tilde \lambda}= \tilde \lambda(\pr_{\calV} p^Q_*  \newbullet)$ on $T^*Q$. Then
$$\tilde \sigma = {\pi^2}^* \sigma - d \theta^{\calV}.$$
\end{lemma}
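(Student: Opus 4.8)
The plan is to recover the identity directly from the canonical splitting of the Liouville one-form induced by the Ehresmann connection; in fact this computation already occurs inside the proof of Lemma~\ref{lemma:landh}, and I would simply isolate it. Write $\widetilde\vartheta$ for the Liouville one-form on $T^*Q$, characterised by $\widetilde\vartheta|_{\tilde p} = \tilde p({\Pi^{T^*Q}}_*\,\bullet)$. Using the splitting $TQ = \calH \oplus \calV$, I would decompose $\widetilde\vartheta = \vartheta^{\calH} + \vartheta^{\calV}$, where $\vartheta^{\calH}|_{\tilde p} = \tilde p(\pr_{\calH}{\Pi^{T^*Q}}_*\,\bullet)$ and $\vartheta^{\calV}$ is the one-form given in the statement; this is legitimate because $\pr_{\calH} + \pr_{\calV} = \id$ on $TQ$.

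The only substantive step is the identity $\vartheta^{\calH} = {\pi^2}^*\vartheta$. To prove it, fix $\tilde p \in T^*_qQ$ and $\tilde w \in T_{\tilde p}T^*Q$. Since $\pr_{\calH}{\Pi^{T^*Q}}_*\tilde w$ already lies in $\calH_q$, it coincides with the horizontal lift of its own $\pi$-pushforward, i.e.\ $\pr_{\calH}{\Pi^{T^*Q}}_*\tilde w = h_q\,\pi_*{\Pi^{T^*Q}}_*\tilde w$ (recall $\pi_*\pr_{\calV}=0$). Evaluating $\tilde p$ on this expression and invoking the defining relation $\pi^2(\tilde p)(v) = \tilde p(h_q v)$ of $\pi^2$ gives $\vartheta^{\calH}(\tilde w) = \pi^2(\tilde p)(\pi_*{\Pi^{T^*Q}}_*\tilde w)$. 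Finally, commutativity of the submersion square $\pi\circ\Pi^{T^*Q} = \Pi^{T^*M}\circ\pi^2$ yields $\pi_*{\Pi^{T^*Q}}_*\tilde w = {\Pi^{T^*M}}_*{\pi^2}_*\tilde w$, whence $\vartheta^{\calH}(\tilde w) = \pi^2(\tilde p)({\Pi^{T^*M}}_*{\pi^2}_*\tilde w) = \vartheta({\pi^2}_*\tilde w) = ({\pi^2}^*\vartheta)(\tilde w)$.

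With this identity established, the conclusion follows by taking exterior derivatives and using that $d$ commutes with pullback:
$$\widetilde\varsigma = -d\widetilde\vartheta = -d\vartheta^{\calH} - d\vartheta^{\calV} = -d({\pi^2}^*\vartheta) - d\vartheta^{\calV} = {\pi^2}^*(-d\vartheta) - d\vartheta^{\calV} = {\pi^2}^*\varsigma - d\vartheta^{\calV}.$$
I do not anticipate a genuine obstacle, since the statement is essentially a bookkeeping consequence of the definition of $\pi^2$. The one point deserving care is the re-expression of the horizontal projection $\pr_{\calH}{\Pi^{T^*Q}}_*\tilde w$ as a horizontal lift, which is precisely what activates property (i) of $\pi^2$; without this observation the appearance of the Liouville form $\vartheta$ on $T^*M$ (rather than some unrelated form) would not be transparent.
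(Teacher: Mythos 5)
Your proposal is correct and follows the paper's own argument essentially verbatim: the paper proves this lemma inside the proof of Lemma~\ref{lemma:landh}(b) by the same splitting $\widetilde \vartheta = \vartheta^{\calH} + \vartheta^{\calV}$, the same identification $\vartheta^{\calH} = {\pi^2}^*\vartheta$ via the relation $\pi^2(\tilde p)(v) = \tilde p(h_q v)$ together with commutativity of the square $\pi \circ \Pi^{T^*Q} = \Pi^{T^*M} \circ \pi^2$, and then takes exterior derivatives. If anything, you supply slightly more detail than the paper, which compresses the rewriting of $\pr_{\calH} {\Pi^{T^*Q}}_* \tilde w$ as the horizontal lift $h_q \, \pi_* {\Pi^{T^*Q}}_* \tilde w$ into a single equality.
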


We next turn to the key lemma which will give us our proof of Theorem~\ref{th:main}.
\begin{lemma} \label{lemma:horver}
Let $\lambda(t)$ be a curve in $T^*M$ with $p^M( \lambda(t)) = \gamma(t)$. Let $\tilde \lambda(t)$ be an $\calH^2$-horizontal lift of $\lambda(t)$ with $p^Q (\tilde \lambda(t)) = \tilde \gamma(t)$. Then $\tilde \gamma(t)$ is an $\calH$-horizontal lift of $\gamma(t)$. Furthermore, $\tilde \lambda(t) = \lambda^{\calH}(t) + \lambda^{\calV}(t)$, where
$\lambda^{\calH}$ is the curve in $\Ann(\calV)$ determined by
\begin{equation} \label{lambdahor} \lambda^{\calH}(t)(\tilde v) = \lambda(t)(\pi_* \tilde v) \text{ for any } \tilde v \in T_{\tilde \gamma(t)}Q,\end{equation}
and $\lambda^{\calV}$ is a curve in $\Ann(\calH)$ satisfying $\bnabla_{\dot \gamma(t)} \lambda^{\calV}(t) = 0.$
\end{lemma}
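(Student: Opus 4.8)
The plan is to use the canonical splitting $T^*Q = \Ann(\calV) \oplus \Ann(\calH)$ and identify the two components of $\widetilde\lambda(t)$ separately. First I would record that $\widetilde\gamma(t) = \Pi^{T^*Q}(\widetilde\lambda(t))$ is the $\calH$-horizontal lift of $\gamma$: since $\widetilde\lambda$ is $\calH^2$-horizontal, $\dot{\widetilde\lambda} = h^2_{\widetilde\lambda}\dot\lambda$, so Lemma~\ref{lemma:landh}(a) gives $\dot{\widetilde\gamma} = \Pi^{T^*Q}_* h^2_{\widetilde\lambda}\dot\lambda = h_{\widetilde\gamma}\dot\gamma$. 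The $\Ann(\calV)$-component is then immediate: being an $\calH^2$-horizontal lift means $\pi^2(\widetilde\lambda(t)) = \lambda(t)$, and for $\widetilde v \in T_{\widetilde\gamma(t)}Q$ the $\Ann(\calV)$-part evaluates to $\widetilde\lambda(\pr_\calH \widetilde v) = \widetilde\lambda(h_{\widetilde\gamma}\pi_*\widetilde v) = \pi^2(\widetilde\lambda)(\pi_*\widetilde v) = \lambda(\pi_*\widetilde v)$, which is exactly \eqref{lambdahor}. Thus $\lambda^{\calH}$ is as claimed, and it remains to show that the $\Ann(\calH)$-component $\lambda^{\calV}(t) := \pr_{\calV}^* \widetilde\lambda(t)$ is $\bnabla$-parallel.

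For the parallel part I would argue by duality against $\bnabla$-parallel vertical fields. Let $\Upsilon(t)$ be any $\bnabla$-parallel vertical vector field along $\widetilde\gamma$. The defining relation of $\bnabla$ on $\Ann(\calH)$ yields the Leibniz rule $\frac{d}{dt}\big(\lambda^{\calV}(\Upsilon)\big) = (\bnabla_{\dot\gamma}\lambda^{\calV})(\Upsilon) + \lambda^{\calV}(\bnabla_{\dot\gamma}\Upsilon) = (\bnabla_{\dot\gamma}\lambda^{\calV})(\Upsilon)$, the last term vanishing since $\Upsilon$ is parallel. Because $\lambda^{\calH}\in\Ann(\calV)$ kills $\Upsilon$, we also have $\lambda^{\calV}(\Upsilon) = \widetilde\lambda(\Upsilon)$. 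Hence it is enough to prove that $t\mapsto \widetilde\lambda(t)(\Upsilon(t))$ is constant: since $\bnabla$-parallel fields span $\calV_{\widetilde\gamma(t)}$ pointwise and $\bnabla_{\dot\gamma}\lambda^{\calV}$ lies in $\Ann(\calH)$, vanishing of $(\bnabla_{\dot\gamma}\lambda^{\calV})(\Upsilon)$ for all such $\Upsilon$ forces $\bnabla_{\dot\gamma}\lambda^{\calV} = 0$.

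To show $\widetilde\lambda(\Upsilon)$ is constant I would extend $\Upsilon$ to a vertical field $\widetilde\Upsilon\in\Gamma(\calV)$ near $\widetilde\gamma$ and pass to the momentum function $P_{\widetilde\Upsilon}(\widetilde p) := \widetilde p(\widetilde\Upsilon)$, so that $\widetilde\lambda(t)(\Upsilon(t)) = P_{\widetilde\Upsilon}(\widetilde\lambda(t))$. Since $dP_{\widetilde\Upsilon} = \widetilde\varsigma(\widetilde\Upsilon^{c*},\cdot)$ for the cotangent lift $\widetilde\Upsilon^{c*}$, I would differentiate using $\dot{\widetilde\lambda} = h^2_{\widetilde\lambda}\dot\lambda$ and rewrite $h^2$ via Lemma~\ref{lemma:landh}(b) as $\ell_{\widetilde\lambda}\dot\lambda$ minus the vertical lift $\vl_{\widetilde\lambda}\widetilde\lambda\,\calR(\dot{\widetilde\gamma},\bullet)$. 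The vertical term contributes $\widetilde\lambda(\calR(\dot{\widetilde\gamma},\Upsilon))$, which is zero because $\calR$ annihilates the vertical vector $\Upsilon$. For the $\ell$-term, the defining property $\widetilde\varsigma(\ell_{\widetilde\lambda}\dot\lambda,\widetilde u) = \varsigma(\dot\lambda,\pi^2_*\widetilde u)$ reduces everything to computing $\pi^2_*\widetilde\Upsilon^{c*}$; differentiating the cotangent-lifted flow of the vertical field $\widetilde\Upsilon$ shows that it moves $\pi^2(\widetilde p)$ within its fibre by the covector $v\mapsto -\widetilde p(\bnabla_v\widetilde\Upsilon)$, i.e. $\pi^2_*\widetilde\Upsilon^{c*}|_{\widetilde\lambda} = \vl_{\lambda}\mu$ with $\mu(v) = -\widetilde\lambda(\bnabla_v\widetilde\Upsilon)$. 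Feeding this back through $\varsigma(\dot\lambda,\vl_\lambda\mu) = \mu(\dot\gamma)$ gives $\frac{d}{dt}\widetilde\lambda(\Upsilon) = \widetilde\lambda(\bnabla_{\dot\gamma}\Upsilon) = 0$, again by parallelism of $\Upsilon$.

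The main obstacle is this last computation, namely pinning down $\pi^2_*$ of the cotangent lift of $\widetilde\Upsilon$ together with the sign conventions for the Liouville form, the canonical symplectic form, and the cotangent lift. The structural input that makes it collapse is that verticality of $\widetilde\Upsilon$ forces $[\,\widetilde\Upsilon, hX\,] = -\bnabla_X\widetilde\Upsilon$ to stay vertical, so that $\pi^2$ is disturbed only through the curvature-free quantity $\widetilde p(\bnabla_v\widetilde\Upsilon)$, which parallelism then kills. Alternatively, once $\widetilde\lambda(\Upsilon)$ is known to be constant, one may phrase the conclusion through the dual parallel transports of a $\bnabla$-parallel frame of $\calV$ along $\widetilde\gamma$ rather than through the Leibniz rule, arriving at the same identification of $\lambda^{\calV}$.
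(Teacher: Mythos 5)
Your proof is correct, and it reaches the parallelism of $\lambda^{\calV}$ by a genuinely different mechanism than the paper. The paper's proof first shows that the curve $\lambda^{\calH}$ of \eqref{lambdahor} is \emph{itself} an $\calH^2$-horizontal lift of $\lambda$ (writing it locally as $t \mapsto \pi^*\alpha|_{\widetilde\gamma(t)}$ and checking that $\widetilde\varsigma(\dot\lambda^{\calH},\bullet)$ annihilates $\calV^2$), then exploits fibrewise linearity of $\pi^2$ --- sums of $\calH^2$-horizontal curves are $\calH^2$-horizontal --- to conclude that $\lambda^{\calV} = \widetilde\lambda - \lambda^{\calH}$ is an $\calH^2$-horizontal lift of the zero curve lying in $\Ann(\calH)$, and finally pairs $\dot\lambda^{\calV}$ directly against sections $\widetilde W$ of $\calV^2$ projecting onto $\Upsilon \in \Gamma(\calV)$, reading off $0 = \widetilde\varsigma(\dot\lambda^{\calV},\widetilde W) = (\bnabla_{\dot\gamma}\lambda^{\calV})(\Upsilon)$. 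You need neither of these two steps: your identification of the $\Ann(\calV)$-component via $\pr_{\calH}\widetilde v = h_{\widetilde\gamma}\pi_*\widetilde v$ is purely pointwise (cleaner for the lemma as stated, though the paper reuses the horizontality of $\lambda^{\calH}$ in the proof of Theorem~\ref{th:main}(a)), and you replace the pairing against $\calV^2$ by differentiating momentum functions $P_{\widetilde\Upsilon}$ along the full curve $\widetilde\lambda$, at the price of importing Lemma~\ref{lemma:landh}(b) and computing $\pi^2_*$ of a cotangent lift --- note your test vectors $\widetilde\Upsilon^{c*}$ are \emph{not} in $\calV^2$, since $\pi^2_*\widetilde\Upsilon^{c*} = \vl_\lambda\mu$ is nonzero in general. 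The individual steps check out: the curvature term from Lemma~\ref{lemma:landh}(b) dies because $\calR(\dot{\widetilde\gamma},\Upsilon)=0$ for vertical $\Upsilon$; your formula $\mu(v) = -\widetilde\lambda(\bnabla_v\widetilde\Upsilon)$ follows from $\frac{d}{ds}\big|_{s=0}(\phi_{-s})_* hX = [\widetilde\Upsilon, hX] = -\bnabla_X\widetilde\Upsilon$ (with $\phi_s$ the flow of $\widetilde\Upsilon$) together with $\pi_*\widetilde\Upsilon = 0$, which confirms $\pi^2_*\widetilde\Upsilon^{c*}$ is a vertical lift; and spanning $\calV_{\widetilde\gamma(t)}$ by $\bnabla$-parallel fields is legitimate because the lift $\widetilde\gamma$ is given, so Lemma~\ref{lemma:paralleltrans} applies. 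Two caveats, both at the same level of informality as the paper's own proof: extending $\Upsilon(t)$ (respectively, in the paper, $\lambda^{\calV}(t)$) to an honest section requires $\widetilde\gamma$ to be locally embedded, and otherwise one should take a time-dependent extension and absorb the extra term $\widetilde\lambda(\partial_t\widetilde\Upsilon)$ into the along-curve derivative; and the sign ambiguities you flag in the conventions for $\vartheta$, $\varsigma$ and the cotangent lift are harmless here, since the two boundary terms are multiplied into quantities that vanish ($\calR$ applied to a vertical vector, and $\bnabla_{\dot\gamma}\Upsilon = 0$), so the conclusion is sign-independent.
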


\begin{proof}
The curve $\tilde \gamma(t)$ is an $\calH$-horizontal lift of $\gamma(t)$ by \eqref{ellandh1}. Let $\lambda^{\calH}(t)$ be defined as in \eqref{lambdahor} with $\tilde \gamma(t)$ a $\calH$-horizontal lift of $\gamma(t)$. Since $\lambda^{\calH}(t) = \pi_{\tilde \gamma(t)}^* \lambda(t)$, we know that $\pi^2(\lambda^{\calH}(t)) = \lambda(t)$ , so all we need to show is that $\lambda^{\calH}(t)$ is $\calH^2$-horizontal to prove that it is a lift of $\lambda(t)$. For a sufficiently short segment of $\gamma(t)$, let $X$ be a vector field on $M$ such that
\begin{equation} \label{Xgamma} X|_{\gamma(t)} = \dot \gamma(t). \end{equation}
Let $W^{\calH}$ be a local vector field on $T^*Q$ such that $p^Q_* W^{\calH} = hX$ and such that $W^{\calH} |_{\lambda^{\calH}(t)} = \dot \lambda^{\calH}(t)$ for a sufficiently short segment. Let $\tilde W$ be an arbitrary vector field on $T^*Q$ with values in $\calV^2 = \ker \pi^2_*$. Then by Lemma~\ref{lemma:symplecticsplit} and Cartan's formula,
\begin{align*} & \sigma(\dot \lambda^{\calH}(t), \tilde W) = - d\theta^{\calV}|_{\lambda^{\calH}(t)} (W^{\calH}, \tilde W) \\
& = \tilde W \theta^{\calV}(W^{\calH}) |_{\lambda^{\calH}(t)}- W^{\calH} \theta^{\calV}(\tilde W)|_{\lambda^{\calH}(t)},
\end{align*}
since $\theta^{\calV}|_{\lambda^{\calH}(t)} = 0$. However, $\theta^{\calV}|_{\tilde \lambda}(W^{\calH}) = \tilde \lambda(\pr_{\calV} hX) = 0$ and
$$ W^{\calH} \theta^{\calV}(\tilde W)|_{\lambda^{\calH}(t)} = \frac{d}{dt} \lambda^{\calH}(t) \pr_{\calV} p_*^Q \tilde W|_{\lambda^{\calH}(t)} = 0,$$
so in conclusion $\sigma(\dot \lambda^{\calH}(t), \tilde W_2)= 0$ and $\lambda^{\calH}(t)$ is hence tangent to $\calH^2$.
 
Since $\pi^2$ commutes with addition on respectively $T^*Q$ and $T^*M$, both $\calV^2$ and $\calH^2$ are preserved under addition. In particular, the sum of two $\calH^2$-horizontal curves are again $\calH^2$-horizontal.
As a consequence, if $\tilde \lambda(t)$ is any horizontal lift of~$\lambda(t)$, then $\lambda^{\calV}(t) = \tilde \lambda(t) - \lambda^{\calH}(t)$ must be an $\calH^2$-horizontal lift of $\zero|_{\tilde \gamma(t)}$, the zero section along $\tilde \gamma(t)$. In particular $\lambda^{\calV}(t)$ is a curve in $\Ann(\calH)$.

Again for a sufficiently short segment of $\lambda^{\calV}(t)$, let $W^{\calV}$ be a local vector field such that $W^{\calV}|_{\lambda^{\calV}(t)} = \dot \lambda^{\calV}(t)$ and such that $p^Q_* W^{\calV} = hX$ where $X$ is a vector field such as in \eqref{Xgamma}. Let $V$ be any vector field on $Q$ with values in $\calV$ and let $\tilde W$, be any vector field in $\calV^2$ satisfying $p^Q_* \tilde W = V.$ Then since $\lambda^{\calV}(t)$ is a $\calH^2$-horizontal lift and since $\theta^{\calV}(W^{\calV} |_{\tilde \lambda}) = \tilde \lambda \pr_{\calV} hX = 0$ for any $\tilde \lambda \in T^*Q$, Lemma~\ref{lemma:symplecticsplit} and Cartan's formula tells us that
\begin{align} \label{getbnabla}
0 & = - \sigma(\dot \lambda^{\calV}(t), \tilde W) =  d\theta^{\calV}|_{\lambda^{\calV}(t)} (W^{\calV}, \tilde W) \\ \nonumber
& =W^{\calV} \theta(\tilde W)|_{\lambda^{\calV}(t)} - \theta^{\calV}|_{\lambda^{\calV}(t)} ([W^{\calV}, \tilde W]) \\ \nonumber
& =\frac{d}{dt} \lambda^{\calV}(t) V - \lambda^{\calV}(t) ([hX, V]) = (\bnabla_{\dot \gamma} \lambda^{\calV}(t) )(V).
\end{align}
In conclusion, $\bnabla_{\dot \gamma} \lambda^{\calV}(t) = 0$.
 \end{proof}

\subsubsection{Proof of Theorem \ref{th:main}} \label{sec:proofmainsec}
Let $H$ be any smooth function on $T^*M$ and let $\tilde H = H \circ \pi^2$. By the definition of $\liftmap$ and \eqref{prelift}, $\liftmap \vec{H}$ is the Hamiltonian vector field of $\tilde H$.

To prove (a), observe that $\liftmap \vec{H}$ and $h^2 \vec{H}$ coincide on $\Ann(\calV)$ by Lemma~\ref{lemma:landh}. Hence, if $\lambda(t)$ is an integral curve of $H$ projecting to $\gamma(t)$, then $\lambda^{\calH}(t)$ defined as in~\eqref{lambdahor} with respect to some horizontal lift $\tilde \gamma(t)$ of $\gamma(t)$ is an integral curve of $\tilde H$. Since all elements in $\Ann(\calV)$ are pull-backs of elements in $T^*M$, it follows that all integral curves of $\tilde H$ with initial condition in $\Ann(\tilde \calV)$ will be of this form. 

For the proof of (b), introduce the map $\Phi^{\calV}:T^*Q \to T^*Q$ defined by $\Phi^{\calV}(\tilde \lambda) = \pr_{\calV}^* \tilde \lambda$. We again use Lemma~\ref{lemma:landh}~(b), 
$$ \dot \lambda(t) = \pi^2_* \liftmap \vec{H}|_{\tilde \lambda(t)} = \vec{H}|_{\lambda(t)} + \vl_{\lambda(t)} \tilde \lambda(t) \calR( \dot \gamma(t), \newbullet )
 = \vec{H}|_{\lambda(t)} + \vl_{\lambda(t)} \beta(t)\calR( \dot \gamma(t), \newbullet) ,
$$
where $\beta (t) = \Phi^\calV(\lambda(t))$. If $p^Q(\tilde \lambda) = q$, the equality
\begin{align*} &- {\Phi^{\calV}}_*\left(h^2_{\tilde \lambda} w - \liftmap_{\tilde \lambda} w\right) = {\Phi^{\calV}}_* (\vl_{\tilde \lambda} \tilde \lambda \calR(h_q p^M_* w, \newbullet)) \\
& = \left. \frac{d}{ds} \pr_{\calV}^*(\tilde \lambda + s\tilde \lambda \calR(h_q p^M_* w, \newbullet)) \right|_{s=0} = 0,\end{align*}
and Lemma~\ref{lemma:horver} give us $\bnabla_{\dot \gamma} \beta(t) = 0$, where $\gamma(t)$ is the projection of $\lambda(t)$ to $M$. Finally, we know that $\tilde \gamma(t) = p^Q(\tilde \lambda(t))$ is an $\calH$-horizontal lift of $ \gamma(t) = p^M(\lambda(t))$ from Lemma~\ref{lemma:landh}~(a).

\subsubsection{Representation in local coordinates} \label{sec:localcoordinates}
We will use the coordinate of \eqref{coordinates}. The bundle $\calV^2 = \ker \pi^2_*$ is spanned by $\partial_{y_1}, \dots, \partial_{y_\nu}$ and $\partial_{b_1}, \dots, \partial_{b_\nu}$. 
Relative to the same coordinates, $\calH^2$ is spanned by
$$h^2 \partial_{x_j} = h\partial_{x_j} + \sum_{\kappa,\mu=1}^\nu b_{\mu} {\bf \Gamma}_{j\kappa}^\mu \partial_{b_\kappa} \qquad \text{ and } \qquad h^2 \partial_{a_j} = \partial_{a_j},$$
or
$$\liftmap \partial_{x_j} = h \partial_{x_j} -\sum_{\kappa =1}^\nu \sum_{i=1}^n b_{\kappa} \calR_{ij}^\kappa \partial_{a_i} + \sum_{\kappa, \mu=1}^\nu b_\mu {\bf \Gamma}_{j\kappa}^\mu \partial_{b_\kappa} \qquad \text{and} \qquad \liftmap \partial_{a_j} = \partial_{a_j}  ,$$
$$\calR_{ij}^\kappa = \tau_k ([h\partial_{x_i}, h\partial_{x_j}]) , \qquad {\bf \Gamma}_{i\mu}^\kappa = \tau_\kappa ([h\partial_{x_i}, \partial_{y_\mu}]) .$$

\subsection{Proof of Proposition~\ref{prop:sRabnormal}} \label{app:abnormal}
We will use the notation of Section \ref{sec:proofmain}. Let~$\tilde \lambda(t)$ be a characteristic of $\tilde D$ and let $\lambda(t)$, $\beta(t)$, $\tilde \gamma(t)$ and $\gamma(t)$ be defined as in Proposition~\ref{prop:sRabnormal}. We make the following observations.
\begin{enumerate}[\rm (i)]
\item If $\lambda, \alpha_0 \in \Ann(\tilde D)_q$, $q \in Q$, then $\vl_{\lambda} \alpha_0 \in T_\lambda \Ann(\tilde D)$.
\item If $\alpha(t)$ is a curve in $\Ann(D)$ and $\tilde \alpha(t)$ is its $\calH^2$-horizontal lift, then by Lemma~\ref{lemma:horver}, $\tilde \alpha(t)$ is a curve in $\Ann(\tilde D)$. Hence, for any $\tilde \lambda \in \Ann(\tilde D)$ and $w \in T_{\pi^2(\tilde \lambda)} \Ann(D)$, we have $h_{\tilde \lambda}^2 w \in T_{\tilde \lambda}\Ann(\tilde D)$.
\item If $\tilde \alpha$ is a one-form vanishing on $\tilde D$, it can be seen as a map $\tilde \alpha: Q \to \Ann(\tilde D)$, and so the image of $\tilde \alpha_* $ is contained in $T\Ann(\tilde D)$.
\end{enumerate}
Since $\tilde \lambda(t)$ is a characteristic, $\sigma(\dot {\tilde \lambda}(t), \newbullet)$ vanishes on all vectors of the type (i), (ii) and (iii). Looking at each of these requirements will give us respectively (i), (ii) and (iii) of Proposition~\ref{prop:sRabnormal}. 

To start with type (i), using \eqref{verticalSigma} we obtain that for any $\tilde \alpha \in \Gamma(\Ann(\tilde D)),$
$$0 = \sigma(\dot{\tilde \lambda}(t), \vl \tilde \alpha) =  \tilde \alpha(\dot {\tilde \gamma}(t)),$$
meaning that $\tilde \gamma(t)$ has to be $\tilde D$-horizontal. Since $\pi_* \tilde D = D$, we know that $\gamma(t)$ is $D$-horizontal.

Continuing with type (ii), let $\alpha \in \Gamma(\Ann(D))$ be a one-form on $M$ with $\alpha|_{\gamma(t)} = \lambda(t)$. Then $\alpha_*(TM) \subseteq T\Ann(D)$, and so for any $v \in T_{\gamma(t)} M$,
\begin{align*}
0 & = \tilde \sigma(\dot {\tilde \lambda}(t), h_{\tilde \lambda(t)}^2 \alpha_* v) = \tilde \sigma\left(\dot {\tilde \lambda}(t), S_{\tilde \lambda(t)} \alpha_* v - \vl_{\tilde \lambda(t)} \tilde \lambda(t) \calR(h_{\tilde \gamma(t)} v, \newbullet)\right) \\
& = \sigma(\dot \lambda(t), \alpha_* v) + \tilde \lambda(t) \calR(\dot \gamma(t), v),
\end{align*}
 by Lemma~\ref{lemma:landh}. Write $p^{\Ann(D)}: \Ann(D) \to M$ for the natural projection. Since $p_*^{\Ann(D)} \alpha_*$ is equal to the identity on $TM$, the image of $\alpha_*$ in $T_{\lambda(t)} \Ann(D)$ must be transverse to $\ker p^{\Ann(D)}_*$. Hence, any $w \in T_{\lambda(t)} \Ann(D)$ is on the form $w = \alpha_* v + \vl_{\lambda(t)} \eta_0$ for some $v \in T_{\gamma(t)} M$, $\eta_0 \in \Ann(D)_{\gamma(t)}$. This lets us write
\begin{align*}
& \sigma(\dot \lambda(t), w) = \sigma(\dot \lambda(t), \alpha_* v + \vl_{\lambda(t)} \eta_0)   = - \tilde \lambda(t) \calR(\dot \gamma(t), v) - \eta_0(\dot \gamma(t)) \\
& = - \beta(t) \calR(\dot \gamma(t), v) = - \beta(t) \calR(\dot \gamma(t), p_*^M w).
\end{align*}

To determine the equations of $\beta(t)$, we consider vectors of type (iii). Let $V$ be any vector field on $Q$ with values in $\calV$. For one-form $\tilde \alpha\in \Gamma(\Ann(\tilde D))$ with $\tilde \alpha|_{\tilde \gamma(t)} = {\tilde \lambda}(t)$, we get
\begin{align*}
& 0 = \tilde \sigma(\dot {\tilde \lambda}(t) , \tilde \alpha_* V) = \sigma(\dot \lambda(t), \pi^2_* \tilde \alpha_* V) - d\theta^{\calV}(\dot {\tilde \lambda}(t) , \tilde \alpha_* V) \\
& = - \beta(t) \calR( \dot \gamma(t), p_*^M \pi^2_* \tilde \alpha_* V)  - d\theta^{\calV}(\dot {\tilde \lambda}(t) , \tilde \alpha_* V) .
\end{align*}
Since $p^M \circ \pi^2 \circ \alpha = \pi$, the first term vanishes. The last term is equal to $-(\bnabla_{\dot \gamma} \beta(t) )(V) $ by the argument in \eqref{getbnabla}.

\bibliographystyle{habbrv}
\bibliography{Bibliography}

\end{document}